\newcommand{\showgrid}{}
\newcommand{\gridon}{\renewcommand{\showgrid}{\psset{subgriddiv=1,griddots=10,gridlabels=6pt}\psgrid}}
\newif\ifenglish
\newif\ifvariant
\def\bit{\begin{itemize}}
\def\eit{\end{itemize}}
\def\beq{\begin{equation}}
\def\eeq{\end{equation}}
\def\of#1{\left(#1\right)} 
\def\pas#1{\left(#1\right)} 
\def\brk#1{\left[#1\right]} 
\def\setof#1{\left\{#1\right\}}
\def\abs#1{{\left\lvert{#1}\right\rvert}}
\def\defeq{\stackrel{\text{\tiny def}}{=}}
\def\C{{\mathbb C}}
\def\F{{\mathbb F}}
\def\N{{\mathbb N}}
\def\Z{{\mathbb Z}}
\def\0{{\mathbf 0}} 
\def\1{{\mathbf 1}} 
\def\Iverson#1{\left[#1\right]}
\def\CT1{CT1}
\def\xxxCT1{CT1}
\newtheorem{thm}{\ifenglish Theorem\else Satz\fi}[section] 
\newtheorem{lem}[thm]{Lemma}
\newtheorem{rem}[thm]{\ifenglish Remark\else Bemerkung\fi}
\newtheorem{dfn}[thm]{Definition}
\newtheorem{cor}[thm]{\ifenglish Corollary\else Korollar\fi}
\newtheoremstyle{excstyle}
  {1em}
  {2pt}
  {\sffamily\footnotesize\slshape}
  {0pt}
  {\sffamily\footnotesize\bfseries}
  {:}
  { }
  {}
\theoremstyle{excstyle}
\def\thmref#1{\ifenglish Theorem\else Satz\fi~\ref{#1}}
\def\lemref#1{Lemma~\ref{#1}}
\def\corref#1{\ifenglish Corollary\else Korollar\fi~\ref{#1}}
\def\figref#1{\ifenglish Figure\else Abbildung\fi~\ref{#1}}
\def\figref#1{Fi\-gu\-re~\ref{#1}}
\def\defeq{:=}
\def\qpoch#1#2#3{\pas{#1;#2}_{#3}}
\title{The quotient of generating functions of lozenge tilings for certain regions derived from hexagons, obtained with non--intersecting lattice paths}
\author{Markus Fulmek\thanks{
Research supported by the National Research Network ``Analytic
Combinatorics and Probabilistic Number Theory'', funded by the
Austrian Science Foundation. 
}\\
\small Fakult\"at f\"ur Mathematik \\
\small Oskar-Morgenstern-Platz 1, A-1090 Wien, Austria \\
\small \tt Markus.Fulmek@Univie.Ac.At
}
\date{2020} 
\def\secA{\section}
\def\secB{\subsection}
\def\EM#1{{\em #1\/}}
\begin{document}
\bibliographystyle{plain}


\long\def%
\begin{figure}%
\begin{center}%
\input graphics/#%
\end{center}%
\caption{1}%
\label{fig:#}%
\end{figure}%
#2{%
\begin{figure}%
\begin{center}%
\input graphics/#1%
\end{center}%
\caption{#2}%
\label{fig:#1}%
\end{figure}%
}

\long\def%
\begin{figure}%
\begin{center}%
\input graphics/#%
\hfil%
\input graphics/1%
\end{center}%
{\small 2}%
\caption{#}%
\label{fig:#-1}%
\end{figure}%
#3#4{%
\begin{figure}%
\begin{center}%
\input graphics/#1%
\hfil%
\input graphics/#2%
\end{center}%
{\small #4}%
\caption{#3}%
\label{fig:#1-#2}%
\end{figure}%
}

\long\def%
\begin{figure}%
\begin{center}%
\input graphics/#%
\end{center}%
{{\small #}}
\caption{1}%
\label{fig:#}%
\end{figure}%
2#3{%
\begin{figure}%
\begin{center}%
\input graphics/#1%
\end{center}%
{{\small #3}}
\caption{#2}%
\label{fig:#1}%
\end{figure}%
}

\parindent0pt
\parskip1em

\def\gf#1{{\mathfrak{gf}}\!\of{#1}}
\def\gfone#1{{\overline{\mathfrak{gf}}}\!\of{#1}}
\def\gftwo#1{{\widehat{\mathfrak{gf_1}}}\!\of{#1}}
\def\gfthree#1{{\widehat{\mathfrak{gf_0}}}\!\of{#1}}

\def\gfhalfeven#1{{\mathbf{gf}_0}\!\of{#1}}
\def\gfhalfodd#1{{\mathbf{gf}_1}\!\of{#1}}
\def\laiweight#1{\pas{X q^{#1}+Y q^{-\pas{#1}}}}
\def\maindet#1{{\mathbf d}\of{#1}}
\def\mfmat#1{{\mathbf m}\of{#1}}
\def\mfmatprime#1{{\mathbf m^\prime}\of{#1}}
\def\mfmatpprime#1{{\mathbf m^{\prime\prime}}\of{#1}}
\def\mfmatppprime{{\mathbf m^{\prime\prime\prime}}}
\def\mfsimple#1{{\mathbf s}\of{#1}}
\def\mffac#1{{\mathbf f}\of{#1}}
\def\mfdet#1{{\mathbf d}\of{#1}}
\def\minor#1#2#3#4{\brk{\maindet{#1}}_{#2,#3}^{#4}}
\def\mfminor#1#2#3#4{\brk{\mfdet{#1}}_{#2,#3}^{#4}}
\def\laplaceminor#1#2{\brk{\maindet{#1}}_{\cancel 1,\cancel #2}}

\def\mfentry#1{{\mathbf e}\of{#1}}

\def\prodcoeff#1#2#3{\mu^{#1}_{#2,#3}}

\def\mfseq{{\mathbf a}}

\def\qbinom#1#2{\genfrac{[}{]}{0pt}{}{#1}{#2}_q}
\def\qexp#1#2{e\of{#1,#2}}
\def\inner#1#2{\left\langle#1,#2\right\rangle}
\def\ratfield{\F}
\def\pprime{{\prime\prime}}
\def\mainminor#1#2#3{\left[{#1}\right]_{i\geq #2,j\geq #3}}
\def\step#1{\subsubsection*{#1:}}

\maketitle

\begin{abstract}
In a recent preprint, Lai
showed that the quotient of generating functions of weighted lozenge tilings of two ``half hexagons
with lateral dents'', which differ only in width, factors nicely, and the same is true for the
quotient of generating functions of weighted lozenge tilings of  two ``quarter hexagons
with lateral dents''. Lai achieved this by using ``graphical condensation''
(i.e., application of a certain Pfaffian identity to the weighted enumeration of matchings).

The purpose of this note is to exhibit how this can be
done by the Lindström--Gessel--Viennot method for nonintersecting lattice paths.
For the case of ``half hexagons'', basically the same observation, but restricted to mere enumeration (i.e., all
weights of lozenge tilings are equal to $1$), is contained in a recent preprint of Condon.
\end{abstract}

\secA{Lai's observation for lozenge tilings}
In a recent preprint, Lai \cite{Lai:2020:ROTGFOSAQH} considers lozenge tilings of ``half hexagons with lateral dents''
and of ``quarter hexagons with lateral dents'' and shows that the quotient of the generating functions of such tilings
(with a certain weight) for two objects (i.e., two ``half'' or two ``quarter'' hexagons) which differ only in width (to be explained
below) has a nice factorization. We shall show how these observations 
can be
obtained by the 
the Lindström--Gessel--Viennot method \cite{Lindstroem:1973:OTVROIM,Gessel-Viennot:1998:DPAPP} of non--intersecting lattice paths.

\secA{Half hexagons}
We shall start with the case of ``half hexagons'', since it is much simpler. 

The literature on tilings enumerations is abundant (see, for instance, \cite{Ciucu-Eisenkoelbl-Krattenthaler-Zare:2001:EOLT}); for the experienced reader it certainly suffices to have
a look at the left picture in \figref{fig:lai-hex-lai-nlp}: A ``half hexagon'' is simply the upper half of some
hexagon with a horizontal symmetry axis, drawn in the triangular lattice; and ``lateral dents'' are triangles of this
``half hexagon'' adjacent to its lateral sides which were \EM{removed} from the ``half hexagon''. All \EM{vertical}
lozenges of a tiling are labelled: This labelling is \EM{vertically constant} and \EM{horizontally increasing by $1$ from
left to right}, such that all vertical lozenges bisected by the vertical symmetry axis of the ``half hexagon'' have
label $0$ (see the left picture in \figref{fig:lai-hex-lai-nlp}). Let $T$ be some lozenge tiling whose vertical
lozenges are labelled ${v_1, v_2, \dots, v_m}$, then the weight of $T$ is defined as
$$
w\of T\defeq\prod_{i=1}^m \frac{X q^{v_i} + Y q^{-v_i}}2.
$$
Lai observed that if \EM{only the width $x$} (i.e., the length of the upper horizontal side) of such ``half hexagon with lateral dents''
is changed (i.e., the height and the relative positions of the lateral dents are unchanged), then the corresponding generating function of all tilings (weighted as described above) changes by a
simple product which
does not contain the variables $X$ or $Y$. Lai provided a proof for this
fact by ``graphical condensation'' (i.e., application of a certain Pfaffian identity to the enumeration of matchings).

\begin{figure}%
\begin{center}%
\psset{unit=0.5cm}
\begin{pspicture}(-6.0,-6.5622)(6.0,4.3301)
\psset{linewidth=0.02,fillstyle=none,linecolor=black}
\psline(-2.5,0.0)(0.0,4.3301)
\psline(2.5,0.0)(0.0,4.3301)
\psline(-2.0,-0.866)(0.5,3.4641)
\psline(2.0,-0.866)(-0.5,3.4641)
\psline(-1.5,-1.7321)(1.0,2.5981)
\psline(1.5,-1.7321)(-1.0,2.5981)
\psline(-1.0,-2.5981)(1.5,1.7321)
\psline(1.0,-2.5981)(-1.5,1.7321)
\psline(-0.5,-3.4641)(2.0,0.866)
\psline(0.5,-3.4641)(-2.0,0.866)
\psline(0.0,-4.3301)(2.5,0.0)
\psline(0.0,-4.3301)(-2.5,0.0)
\psset{linecolor=lightgray}
\psline(-0.0,4.3301)(0.0,4.3301)
\psline(-0.5,3.4641)(0.5,3.4641)
\rput(0.0,3.4641){{\tiny\black $0$}}
\psline(-1.0,2.5981)(1.0,2.5981)
\rput(-0.5,2.5981){{\tiny\black $-1$}}
\rput(0.5,2.5981){{\tiny\black $1$}}
\psline(-1.5,1.7321)(1.5,1.7321)
\rput(-1.0,1.7321){{\tiny\black $-2$}}
\rput(0.0,1.7321){{\tiny\black $0$}}
\rput(1.0,1.7321){{\tiny\black $2$}}
\psline(-2.0,0.866)(2.0,0.866)
\rput(-1.5,0.866){{\tiny\black $-3$}}
\rput(-0.5,0.866){{\tiny\black $-1$}}
\rput(0.5,0.866){{\tiny\black $1$}}
\rput(1.5,0.866){{\tiny\black $3$}}
\psset{fillstyle=solid,linecolor=black}
\pspolygon[fillcolor=Apricot](-3.0,-0.866)(-2.0,-0.866)(-1.5,0.0)(-2.5,0.0)
\psline[linewidth=0.1,linecolor=white](-2.75,-0.433)(-1.75,-0.433)
\pspolygon[fillcolor=Apricot](-2.0,-0.866)(-1.0,-0.866)(-0.5,0.0)(-1.5,0.0)
\psline[linewidth=0.1,linecolor=white](-1.75,-0.433)(-0.75,-0.433)
\pspolygon[fillcolor=Tan](-1.0,-0.866)(-0.5,-1.7321)(0.0,-0.866)(-0.5,0.0)
\psline[linewidth=0.1,linecolor=white](-0.75,-0.433)(-0.25,-1.299)
\rput(-0.5,-0.866){{\tiny\black $-1$}}
\pspolygon[fillcolor=Mahogany](0.0,-0.866)(1.0,-0.866)(0.5,0.0)(-0.5,0.0)
\pspolygon[fillcolor=Mahogany](1.0,-0.866)(2.0,-0.866)(1.5,0.0)(0.5,0.0)
\pspolygon[fillcolor=Mahogany](2.0,-0.866)(3.0,-0.866)(2.5,0.0)(1.5,0.0)
\pspolygon[fillcolor=black](-3.5,-1.7321)(-2.5,-1.7321)(-3.0,-0.866)(-3.0,-0.866)
\pspolygon[fillcolor=Mahogany](-2.5,-1.7321)(-1.5,-1.7321)(-2.0,-0.866)(-3.0,-0.866)
\pspolygon[fillcolor=Mahogany](-1.5,-1.7321)(-0.5,-1.7321)(-1.0,-0.866)(-2.0,-0.866)
\pspolygon[fillcolor=Apricot](-0.5,-1.7321)(0.5,-1.7321)(1.0,-0.866)(0.0,-0.866)
\psline[linewidth=0.1,linecolor=white](-0.25,-1.299)(0.75,-1.299)
\pspolygon[fillcolor=Apricot](0.5,-1.7321)(1.5,-1.7321)(2.0,-0.866)(1.0,-0.866)
\psline[linewidth=0.1,linecolor=white](0.75,-1.299)(1.75,-1.299)
\pspolygon[fillcolor=Apricot](1.5,-1.7321)(2.5,-1.7321)(3.0,-0.866)(2.0,-0.866)
\psline[linewidth=0.1,linecolor=white](1.75,-1.299)(2.75,-1.299)
\pspolygon[fillcolor=Tan](2.5,-1.7321)(3.0,-2.5981)(3.5,-1.7321)(3.0,-0.866)
\psline[linewidth=0.1,linecolor=white](2.75,-1.299)(3.25,-2.1651)
\rput(3.0,-1.7321){{\tiny\black $6$}}
\pspolygon[fillcolor=Tan](-4.0,-2.5981)(-3.5,-3.4641)(-3.0,-2.5981)(-3.5,-1.7321)
\psline[linewidth=0.1,linecolor=white](-3.75,-2.1651)(-3.25,-3.0311)
\rput(-3.5,-2.5981){{\tiny\black $-7$}}
\pspolygon[fillcolor=Mahogany](-3.0,-2.5981)(-2.0,-2.5981)(-2.5,-1.7321)(-3.5,-1.7321)
\pspolygon[fillcolor=Mahogany](-2.0,-2.5981)(-1.0,-2.5981)(-1.5,-1.7321)(-2.5,-1.7321)
\pspolygon[fillcolor=Mahogany](-1.0,-2.5981)(0.0,-2.5981)(-0.5,-1.7321)(-1.5,-1.7321)
\pspolygon[fillcolor=Mahogany](0.0,-2.5981)(1.0,-2.5981)(0.5,-1.7321)(-0.5,-1.7321)
\pspolygon[fillcolor=Mahogany](1.0,-2.5981)(2.0,-2.5981)(1.5,-1.7321)(0.5,-1.7321)
\pspolygon[fillcolor=Mahogany](2.0,-2.5981)(3.0,-2.5981)(2.5,-1.7321)(1.5,-1.7321)
\pspolygon[fillcolor=black](3.0,-2.5981)(4.0,-2.5981)(3.5,-1.7321)(3.5,-1.7321)
\pspolygon[fillcolor=black](-4.5,-3.4641)(-3.5,-3.4641)(-4.0,-2.5981)(-4.0,-2.5981)
\pspolygon[fillcolor=Apricot](-3.5,-3.4641)(-2.5,-3.4641)(-2.0,-2.5981)(-3.0,-2.5981)
\psline[linewidth=0.1,linecolor=white](-3.25,-3.0311)(-2.25,-3.0311)
\pspolygon[fillcolor=Apricot](-2.5,-3.4641)(-1.5,-3.4641)(-1.0,-2.5981)(-2.0,-2.5981)
\psline[linewidth=0.1,linecolor=white](-2.25,-3.0311)(-1.25,-3.0311)
\pspolygon[fillcolor=Apricot](-1.5,-3.4641)(-0.5,-3.4641)(0.0,-2.5981)(-1.0,-2.5981)
\psline[linewidth=0.1,linecolor=white](-1.25,-3.0311)(-0.25,-3.0311)
\pspolygon[fillcolor=Tan](-0.5,-3.4641)(0.0,-4.3301)(0.5,-3.4641)(0.0,-2.5981)
\psline[linewidth=0.1,linecolor=white](-0.25,-3.0311)(0.25,-3.8971)
\rput(0.0,-3.4641){{\tiny\black $0$}}
\pspolygon[fillcolor=Mahogany](0.5,-3.4641)(1.5,-3.4641)(1.0,-2.5981)(0.0,-2.5981)
\pspolygon[fillcolor=Mahogany](1.5,-3.4641)(2.5,-3.4641)(2.0,-2.5981)(1.0,-2.5981)
\pspolygon[fillcolor=Mahogany](2.5,-3.4641)(3.5,-3.4641)(3.0,-2.5981)(2.0,-2.5981)
\pspolygon[fillcolor=Mahogany](3.5,-3.4641)(4.5,-3.4641)(4.0,-2.5981)(3.0,-2.5981)
\pspolygon[fillcolor=Apricot](-5.0,-4.3301)(-4.0,-4.3301)(-3.5,-3.4641)(-4.5,-3.4641)
\psline[linewidth=0.1,linecolor=white](-4.75,-3.8971)(-3.75,-3.8971)
\pspolygon[fillcolor=Tan](-4.0,-4.3301)(-3.5,-5.1962)(-3.0,-4.3301)(-3.5,-3.4641)
\psline[linewidth=0.1,linecolor=white](-3.75,-3.8971)(-3.25,-4.7631)
\rput(-3.5,-4.3301){{\tiny\black $-7$}}
\pspolygon[fillcolor=Mahogany](-3.0,-4.3301)(-2.0,-4.3301)(-2.5,-3.4641)(-3.5,-3.4641)
\pspolygon[fillcolor=Mahogany](-2.0,-4.3301)(-1.0,-4.3301)(-1.5,-3.4641)(-2.5,-3.4641)
\pspolygon[fillcolor=Mahogany](-1.0,-4.3301)(0.0,-4.3301)(-0.5,-3.4641)(-1.5,-3.4641)
\pspolygon[fillcolor=Apricot](0.0,-4.3301)(1.0,-4.3301)(1.5,-3.4641)(0.5,-3.4641)
\psline[linewidth=0.1,linecolor=white](0.25,-3.8971)(1.25,-3.8971)
\pspolygon[fillcolor=Tan](1.0,-4.3301)(1.5,-5.1962)(2.0,-4.3301)(1.5,-3.4641)
\psline[linewidth=0.1,linecolor=white](1.25,-3.8971)(1.75,-4.7631)
\rput(1.5,-4.3301){{\tiny\black $3$}}
\pspolygon[fillcolor=Mahogany](2.0,-4.3301)(3.0,-4.3301)(2.5,-3.4641)(1.5,-3.4641)
\pspolygon[fillcolor=Mahogany](3.0,-4.3301)(4.0,-4.3301)(3.5,-3.4641)(2.5,-3.4641)
\pspolygon[fillcolor=Mahogany](4.0,-4.3301)(5.0,-4.3301)(4.5,-3.4641)(3.5,-3.4641)
\pspolygon[fillcolor=black](-5.5,-5.1962)(-4.5,-5.1962)(-5.0,-4.3301)(-5.0,-4.3301)
\pspolygon[fillcolor=Mahogany](-4.5,-5.1962)(-3.5,-5.1962)(-4.0,-4.3301)(-5.0,-4.3301)
\pspolygon[fillcolor=Tan](-3.5,-5.1962)(-3.0,-6.0622)(-2.5,-5.1962)(-3.0,-4.3301)
\psline[linewidth=0.1,linecolor=white](-3.25,-4.7631)(-2.75,-5.6292)
\rput(-3.0,-5.1962){{\tiny\black $-6$}}
\pspolygon[fillcolor=Mahogany](-2.5,-5.1962)(-1.5,-5.1962)(-2.0,-4.3301)(-3.0,-4.3301)
\pspolygon[fillcolor=Mahogany](-1.5,-5.1962)(-0.5,-5.1962)(-1.0,-4.3301)(-2.0,-4.3301)
\pspolygon[fillcolor=Mahogany](-0.5,-5.1962)(0.5,-5.1962)(0.0,-4.3301)(-1.0,-4.3301)
\pspolygon[fillcolor=Mahogany](0.5,-5.1962)(1.5,-5.1962)(1.0,-4.3301)(0.0,-4.3301)
\pspolygon[fillcolor=Apricot](1.5,-5.1962)(2.5,-5.1962)(3.0,-4.3301)(2.0,-4.3301)
\psline[linewidth=0.1,linecolor=white](1.75,-4.7631)(2.75,-4.7631)
\pspolygon[fillcolor=Apricot](2.5,-5.1962)(3.5,-5.1962)(4.0,-4.3301)(3.0,-4.3301)
\psline[linewidth=0.1,linecolor=white](2.75,-4.7631)(3.75,-4.7631)
\pspolygon[fillcolor=Apricot](3.5,-5.1962)(4.5,-5.1962)(5.0,-4.3301)(4.0,-4.3301)
\psline[linewidth=0.1,linecolor=white](3.75,-4.7631)(4.75,-4.7631)
\pspolygon[fillcolor=black](4.5,-5.1962)(5.5,-5.1962)(5.0,-4.3301)(5.0,-4.3301)
\pspolygon[fillcolor=black](-6.0,-6.0622)(-5.0,-6.0622)(-5.5,-5.1962)(-5.5,-5.1962)
\pspolygon[fillcolor=Mahogany](-5.0,-6.0622)(-4.0,-6.0622)(-4.5,-5.1962)(-5.5,-5.1962)
\pspolygon[fillcolor=Mahogany](-4.0,-6.0622)(-3.0,-6.0622)(-3.5,-5.1962)(-4.5,-5.1962)
\pspolygon[fillcolor=Apricot](-3.0,-6.0622)(-2.0,-6.0622)(-1.5,-5.1962)(-2.5,-5.1962)
\psline[linewidth=0.1,linecolor=white](-2.75,-5.6292)(-1.75,-5.6292)
\pspolygon[fillcolor=Apricot](-2.0,-6.0622)(-1.0,-6.0622)(-0.5,-5.1962)(-1.5,-5.1962)
\psline[linewidth=0.1,linecolor=white](-1.75,-5.6292)(-0.75,-5.6292)
\pspolygon[fillcolor=Apricot](-1.0,-6.0622)(0.0,-6.0622)(0.5,-5.1962)(-0.5,-5.1962)
\psline[linewidth=0.1,linecolor=white](-0.75,-5.6292)(0.25,-5.6292)
\pspolygon[fillcolor=Apricot](0.0,-6.0622)(1.0,-6.0622)(1.5,-5.1962)(0.5,-5.1962)
\psline[linewidth=0.1,linecolor=white](0.25,-5.6292)(1.25,-5.6292)
\pspolygon[fillcolor=Apricot](1.0,-6.0622)(2.0,-6.0622)(2.5,-5.1962)(1.5,-5.1962)
\psline[linewidth=0.1,linecolor=white](1.25,-5.6292)(2.25,-5.6292)
\pspolygon[fillcolor=Apricot](2.0,-6.0622)(3.0,-6.0622)(3.5,-5.1962)(2.5,-5.1962)
\psline[linewidth=0.1,linecolor=white](2.25,-5.6292)(3.25,-5.6292)
\pspolygon[fillcolor=Apricot](3.0,-6.0622)(4.0,-6.0622)(4.5,-5.1962)(3.5,-5.1962)
\psline[linewidth=0.1,linecolor=white](3.25,-5.6292)(4.25,-5.6292)
\pspolygon[fillcolor=Apricot](4.0,-6.0622)(5.0,-6.0622)(5.5,-5.1962)(4.5,-5.1962)
\psline[linewidth=0.1,linecolor=white](4.25,-5.6292)(5.25,-5.6292)
\pspolygon[fillcolor=black](5.0,-6.0622)(6.0,-6.0622)(5.5,-5.1962)(5.5,-5.1962)
\end{pspicture}%
\hfil%
\psset{unit=0.5cm}
\begin{pspicture}(-1.0,-0.5)(13.0,13.0)
\psset{linewidth=0.03,fillstyle=none,linecolor=gray}
\psline(1.0,1.0)(11.0,1.0)
\psline(1.0,1.0)(1.0,0.0)
\psline(2.0,2.0)(11.0,2.0)
\psline(2.0,2.0)(2.0,0.0)
\psline(3.0,3.0)(11.0,3.0)
\psline(3.0,3.0)(3.0,0.0)
\psline(4.0,4.0)(11.0,4.0)
\psline(4.0,4.0)(4.0,0.0)
\psline(5.0,5.0)(11.0,5.0)
\psline(5.0,5.0)(5.0,0.0)
\psline(6.0,6.0)(11.0,6.0)
\psline(6.0,6.0)(6.0,0.0)
\psline(7.0,7.0)(11.0,7.0)
\psline(7.0,7.0)(7.0,0.0)
\psline(8.0,8.0)(11.0,8.0)
\psline(8.0,8.0)(8.0,0.0)
\psline(9.0,9.0)(11.0,9.0)
\psline(9.0,9.0)(9.0,0.0)
\psline(10.0,10.0)(11.0,10.0)
\psline(10.0,10.0)(10.0,0.0)
\psline(11.0,11.0)(11.0,11.0)
\psline(11.0,11.0)(11.0,0.0)
\psset{linewidth=0.1,fillstyle=none,linecolor=black}
\psline{->}(0,0)(12,0)
\psline{->}(0,0)(0,12)
\psline[linestyle=dotted,linecolor=gray](0,0)(12,12)
\rput(12.5,0){{\footnotesize $x$}}
\rput(0,12.5){{\footnotesize $y$}}
\rput(12.25,12.25){{\footnotesize $y=x$}}
\psset{linewidth=0.225,fillstyle=none,linecolor=blue}
\psline(5.0,5.0)(5.0,4.0)(5.0,3.0)(6.0,3.0)(6.0,2.0)(6.0,1.0)(6.0,0.0)(7.0,0.0)
\psline(7.0,7.0)(8.0,7.0)(8.0,6.0)(8.0,5.0)(8.0,4.0)(9.0,4.0)(9.0,3.0)(10.0,3.0)(10.0,2.0)(10.0,1.0)(10.0,0.0)
\psline(9.0,9.0)(9.0,8.0)(10.0,8.0)(11.0,8.0)(11.0,7.0)(11.0,6.0)(11.0,5.0)(11.0,4.0)(11.0,3.0)(11.0,2.0)(11.0,1.0)(11.0,0.0)
\rput(0.5,-0.3){{\tiny\gray $0$}}
\rput(1.5,-0.3){{\tiny\gray $1$}}
\rput(1.5,0.7){{\tiny\gray $-1$}}
\rput(2.5,-0.3){{\tiny\gray $2$}}
\rput(2.5,0.7){{\tiny\gray $0$}}
\rput(2.5,1.7){{\tiny\gray $-2$}}
\rput(3.5,-0.3){{\tiny\gray $3$}}
\rput(3.5,0.7){{\tiny\gray $1$}}
\rput(3.5,1.7){{\tiny\gray $-1$}}
\rput(3.5,2.7){{\tiny\gray $-3$}}
\rput(4.5,-0.3){{\tiny\gray $4$}}
\rput(4.5,0.7){{\tiny\gray $2$}}
\rput(4.5,1.7){{\tiny\gray $0$}}
\rput(4.5,2.7){{\tiny\gray $-2$}}
\rput(4.5,3.7){{\tiny\gray $-4$}}
\rput(5.5,-0.3){{\tiny\gray $5$}}
\rput(5.5,0.7){{\tiny\gray $3$}}
\rput(5.5,1.7){{\tiny\gray $1$}}
\rput(5.5,2.7){{\tiny\gray $-1$}}
\rput(5.5,3.7){{\tiny\gray $-3$}}
\rput(5.5,4.7){{\tiny\gray $-5$}}
\rput(6.5,-0.3){{\tiny\gray $6$}}
\rput(6.5,0.7){{\tiny\gray $4$}}
\rput(6.5,1.7){{\tiny\gray $2$}}
\rput(6.5,2.7){{\tiny\gray $0$}}
\rput(6.5,3.7){{\tiny\gray $-2$}}
\rput(6.5,4.7){{\tiny\gray $-4$}}
\rput(6.5,5.7){{\tiny\gray $-6$}}
\rput(7.5,-0.3){{\tiny\gray $7$}}
\rput(7.5,0.7){{\tiny\gray $5$}}
\rput(7.5,1.7){{\tiny\gray $3$}}
\rput(7.5,2.7){{\tiny\gray $1$}}
\rput(7.5,3.7){{\tiny\gray $-1$}}
\rput(7.5,4.7){{\tiny\gray $-3$}}
\rput(7.5,5.7){{\tiny\gray $-5$}}
\rput(7.5,6.7){{\tiny\gray $-7$}}
\rput(8.5,-0.3){{\tiny\gray $8$}}
\rput(8.5,0.7){{\tiny\gray $6$}}
\rput(8.5,1.7){{\tiny\gray $4$}}
\rput(8.5,2.7){{\tiny\gray $2$}}
\rput(8.5,3.7){{\tiny\gray $0$}}
\rput(8.5,4.7){{\tiny\gray $-2$}}
\rput(8.5,5.7){{\tiny\gray $-4$}}
\rput(8.5,6.7){{\tiny\gray $-6$}}
\rput(8.5,7.7){{\tiny\gray $-8$}}
\rput(9.5,-0.3){{\tiny\gray $9$}}
\rput(9.5,0.7){{\tiny\gray $7$}}
\rput(9.5,1.7){{\tiny\gray $5$}}
\rput(9.5,2.7){{\tiny\gray $3$}}
\rput(9.5,3.7){{\tiny\gray $1$}}
\rput(9.5,4.7){{\tiny\gray $-1$}}
\rput(9.5,5.7){{\tiny\gray $-3$}}
\rput(9.5,6.7){{\tiny\gray $-5$}}
\rput(9.5,7.7){{\tiny\gray $-7$}}
\rput(9.5,8.7){{\tiny\gray $-9$}}
\rput(10.5,-0.3){{\tiny\gray $10$}}
\rput(10.5,0.7){{\tiny\gray $8$}}
\rput(10.5,1.7){{\tiny\gray $6$}}
\rput(10.5,2.7){{\tiny\gray $4$}}
\rput(10.5,3.7){{\tiny\gray $2$}}
\rput(10.5,4.7){{\tiny\gray $0$}}
\rput(10.5,5.7){{\tiny\gray $-2$}}
\rput(10.5,6.7){{\tiny\gray $-4$}}
\rput(10.5,7.7){{\tiny\gray $-6$}}
\rput(10.5,8.7){{\tiny\gray $-8$}}
\rput(10.5,9.7){{\tiny\gray $-10$}}
\end{pspicture}%
\end{center}%
{\small %
The left picture shows a ``half hexagon'' with side lengths $12, 7, 5, 7$ in the triangular lattice: The lateral sides
have ``dents'' (i.e., missing triangles; indicated in the picture by black colour), $4$ on the left side and $3$ on the
right side. The triangle ``on top'' of this ``half hexagon'' shows the labelling of the \EM{vertical lozenges}, which is
\EM{constant} vertically and \EM{increasing by $1$} horizontally (from left to right). The picture also shows a \EM{lozenge
tiling} of this ``half hexagon with dents'', where the three possible orientations of lozenges ({\bf\color{Mahogany} left--tilted},
{\bf\color{Apricot} right--tilted}
and {\bf\color{Tan} vertical}) are indicated by three different colours: This particular tiling has weight
{\footnotesize $$
w_{-7}^2\cdot w_{-6}\cdot w_{-1}\cdot w_{0}\cdot w_{3}\cdot w_{6},
$$}%
where $w_i\defeq\frac{X q^i + Y q^{-i}}2$. %
The non--intersecting lattice paths corresponding to this tiling are indicated by white lines in the
left picture; the right picture shows a ``reflected, rotated and tilted'' version of these paths in the lattice $\Z\times\Z$,
where horizontal edges $\pas{a,b}\to\pas{a+1,b}$ are labelled $a-2b$ (these labels are shown in the right picture
only for the region of interest in our context, i.e., for $0\leq y\leq x$). Clearly, this bijection between lozenge tilings
and non--intersecting lattice paths (introduced here ``graphically'') is \EM{weight--preserving} if we define the
weight of some family $P$ of of non--intersecting lattice paths as the product of $w_i$, where $i$ runs over the
labels of all horizontal edges belonging to paths in $P$. %
}%
\caption{Pictures corresponding to Figures 1.2.a and 2.1.a in Lai's preprint: %
The length of the upper horizontal side of the ``half hexagon'' in the left picture is the ``width parameter'' $x$
considered by Lai (so $x=5$
in this picture; the ``height'' by definition is equal to the length of the lateral sides, so it is $7$
in this picture).}%
\label{fig:lai-hex-lai-nlp}%
\end{figure}%

\secA{Translation to non--intersecting lattice paths}
The literature on the connection between lozenge tilings and non--intersecting lattice paths is abundant
(see, for instance, \cite[Section 5]{Ciucu-Eisenkoelbl-Krattenthaler-Zare:2001:EOLT});
for the experienced reader it certainly suffices to have
a look at the pictures in \figref{fig:lai-hex-lai-nlp}: It is easy to
see that there is a weight--preserving bijection between lozenge tilings and families of non--intersecting lattice paths in the 
lattice $\Z \times \Z$ with steps to the right and downwards, where steps to the right
from $\pas{a,b}$ to $\pas{a+1,b}$ are labelled $a-2b$ and thus have weight 
$$
\frac{X q^{a-2b} + Y q^{2a-b}}{2}
$$
(and all downward steps have weight $1$). As usual, the weight of a lattice path is the product of all
the weights of steps it consists of.

It is easy to see that the generating function of all lattice paths from initial point $\pas{a, b}$ to terminal point $\pas{c, d}$
is zero for $a>c$ or $b<d$, otherwise it is equal to:
\begin{equation}
\label{eq:gf-lattice-path-general}
\gf{a,b,c,d} = \prod_{j=1}^{c-a}\frac{\pas{X q^{j-1-2b+a}+Y q^{-j+1+2d-a}}\pas{1-q^{2\pas{b-d}+2j}}}{2\pas{1-q^{2j}}}.
\end{equation}
This follows immediately by showing that \eqref{eq:gf-lattice-path-general} fulfils the recursion for such
weighted lattice paths
\begin{align*}
\gf{a,b,a,d} &\equiv 1, \\ 
\gf{a,b,c,b} &= \prod_{i=a-2b}^{c-2b-1}\frac{X q^i + Y q ^{-i}}2, \\ 
\gf{a,b,c,d} &= \frac{X q^{a-2b} + Y q ^{2b-a}}2\gf{a+1,b,c,d} + \gf{a,b-1,c,d}
\end{align*}
for $a\leq c$ and $b\geq d$.
 
We have to specialize this to our situation, i.e., to initial point $\pas{a,a}$ and terminal point $\pas{c,0}$
(see the right picture in \figref{fig:lai-hex-lai-nlp}):
The generating function $\gfone{a,c}$ of all lattice paths from $\pas{a, a}$ to $\pas{c,0}$ is zero for $c<a$,
and for $c\geq a$ it is equal to
{\small
\begin{equation}
\label{eq:gf-lattice-path}
\gfone{a,c} = \gf{a,a,c,0} =
\pas{2 q^a}^{\pas{a-c}}
\prod_{j=1}^{c-a}\frac{\pas{X q^{j-1}+Y q^{1-j}}\pas{1-q^{2a+2j}}}{1-q^{2j}}.
\end{equation}
}

Note that increasing the \EM{width} of the ``half hexagon with lateral dents'' by some $d\in\N$ corresponds
bijectively to shifting all initial and terminal points of the corresponding
non--intersecting lattice paths (i.e.,  $\pas{a,a}\to\pas{a+d,a+d}$ and $\pas{c,0}\to\pas{c+d,0}$), and from 
\eqref{eq:gf-lattice-path} we immediately obtain 
$$
\gfone{a+d,c+d} = \gfone{a,c}\cdot 
\prod_{j=1}^{c-a}\frac{{1-q^{2a+2d+2j}}}{q^d\pas{1-q^{2a+2j}}}.
$$
By using the standard $q$--Pochhammer notation $\qpoch aq0 \defeq 1$ and
\begin{align}
\qpoch aqn &\defeq\prod_{j=0}^{n-1}\pas{1-a\cdot q^j}
\label{eq:qpochplus}
\\
\qpoch aq{-n} &\defeq\frac1{\qpoch{a\cdot q^{-1}}{q^{-1}}n}
\label{eq:qpochminus}
\end{align}
for integer $n>0$,
we may rewrite this as follows:
\begin{equation}
\label{eq:gf-lattice-path-factor}
\gfone{a+d,c+d} = \gfone{a,c}\cdot
\frac{\qpoch{q^{2d+2}}{q^2}{c}}{q^{d c}\cdot\qpoch{q^2}{q^2}{c}}
\cdot
\frac{{q^{d a}\cdot\qpoch{q^2}{q^2}{a}}}{\qpoch{q^{2d+2}}{q^2}{a}}.
\end{equation}
(Note that Lai uses a different notation: In \cite[equation (1.1)]{Lai:2020:ROTGFOSAQH},
$q$ is replaced by $-q$.)

By the well--known Lindström--Gessel--Viennot argument \cite{Lindstroem:1973:OTVROIM,Gessel-Viennot:1998:DPAPP}, the generating function of all families
of non--intersecting lattice paths can be written as a determinant, and
by the \EM{multilinearity} of the determinant, we obtain for all $n\in\N$ and all $n$--tuples
$\pas{a_1 < a_2 <\cdots < a_n}$ and $\pas{c_1 < c_2 < \cdots < c_n}$ with
$a_i\leq c_i$, $1\leq i \leq n$:
{\small
\begin{equation}
\label{eq:lais-observation}
\frac{\det\pas{\gfone{a_i+d,c_j+d}}_{i,j=1}^n}{\det\pas{\gfone{a_i,c_j}}_{i,j=1}^n}
=
\prod_{m=1}^n\pas{
\frac{\qpoch{q^{2d+2}}{q^2}{c_m}}{q^{d c_m}\cdot\qpoch{q^2}{q^2}{c_m}}
\cdot
\frac{{q^{d a_m}\cdot\qpoch{q^2}{q^2}{a_m}}}{\qpoch{q^{2d+2}}{q^2}{a_m}}
}.
\end{equation}
}

By the weight--preserving bijection between lozenge tilings and families of non--intersect\-ing lattice paths, this
is equivalent to Lai's observation \cite[Theorem 1.1]{Lai:2020:ROTGFOSAQH}.
(Basically the same simple approach,
but restricted to mere enumeration, is contained
in a recent preprint of Condon \cite{Condon:2020:LTFRFHWDOTS}.)

\secA{Quarter hexagons}
Lai considered other regions, namely ``quarter hexagons'' with dents: 
Again, the idea should become clear
quickly when looking at a picture, see \figref{fig:lai-hex-quarter}. By the bijection introduced ``graphically'' in
\figref{fig:lai-hex-lai-nlp}, such tilings correspond to families of non--intersecting lattice paths with
initial points $\pas{2b,b}$ and terminal points $\pas{c,0}$ (see \figref{fig:lai-nlp-1-5-b}, where the
same tiling as in \figref{fig:lai-hex-quarter} is considered, but with labels of vertical lozenges
all increased by $1$: This ``shift of labels'' implies starting points $\pas{2b+1,b}$
for the lattice paths).

\begin{figure}%
\begin{center}%
\input graphics/lai-hex-quarter%
\end{center}%
{{\small The picture shows a lozenge tiling of
the ``half of a half hexagon with dents'' (width $8$ and height $12$). The particular tiling shown here is
the same as in
\cite[Figure 1.5.b]{Lai:2020:ROTGFOSAQH}.
Note that the \EM{same picture}, but with all
labels of vertical lozenges \EM{increased
by $1$}, also corresponds to the ``half of a half hexagon with dents'' of the same height $12$,
but of width $9$: This would give the tiling in \cite[Figure 1.5.a]{Lai:2020:ROTGFOSAQH}.}}
\caption{A lozenge tiling of a ``quarter hexagon'' (i.e., the ``half of a half hexagon'')
of width $8$ and height $12$.}%
\label{fig:lai-hex-quarter}%
\end{figure}%

\begin{figure}%
\begin{center}%
\input graphics/lai-nlp-1-5-b%
\end{center}%
{{\small The picture shows the family of non--intersecting lattice paths corresponding
bijectively to the tiling in \figref{fig:lai-hex-quarter} but with all labels \EM{increased by $1$}: The bijection
is the same as the one ``graphically'' introduced in \figref{fig:lai-hex-lai-nlp}. All lattice paths start at
some point $\pas{2b+1,b}$ and end in some point $\pas{c,0}$,
where $c\geq 2b+1$. The tiling of \figref{fig:lai-hex-quarter} (with \EM{unchanged} labels)  would
correspond to the same family of lattice paths \EM{shifted to the left by $1$}, i.e., with initial points
$\pas{2b,b}$ and terminal points $\pas{c-1,0}$.}}
\caption{Non--intersecting lattice paths corresponding
to \figref{fig:lai-hex-quarter}.}%
\label{fig:lai-nlp-1-5-b}%
\end{figure}%

For such ``quarter hexagons'', Lai considered basically the same weight as for the ``half hexagons'', but with two
modifications:
\bit
\item We let $X=Y=1$,
\item and we let the weight of lozenges with label $0$ be $\frac12$ (\EM{not} $\frac{q^0+q^{-0}}2=1$).
\eit
Observe that for $X=Y=1$ we may rewrite \eqref{eq:gf-lattice-path-general} as follows (using again
notation \eqref{eq:qpochplus}):
\begin{multline}
\label{eq:gfx=y=1}
\left.\gf{a, b, c, d}\right\vert_{X=Y=1} = \\
2^{a-c}q^{\frac12\pas{a-c}\pas{a+c-1-4d}}
\frac{\qpoch{-q^{2a-2b-2d}}{q^2}{c-a}\qpoch{q^{2b-2d+2}}{q^2}{c-a}}{\qpoch{q^2}{q^2}{c-a}}
\end{multline}
So defining 
\begin{align*}
\gfthree{b,c} &\defeq\frac12\left.\gf{2b+1, b, c,0}\right\vert_{X=Y=1}+\left.\gf{2b, b-1, c,0}\right\vert_{X=Y=1},\\
\gftwo{b,c}  &\defeq\left.\gf{2b+1, b, c,0}\right\vert_{X=Y=1},
\end{align*} we obtain from
\eqref{eq:gfx=y=1}:
\begin{align}
\gfthree{b, c} &=
\frac{2^{2b-c}q^{b\pas{2b-1}-\frac12 \pas{c-1}c}\pas{1-q^{2c}}}{1-q^{4b}}
	{
		\frac{
		\qpoch{q^{4b+4}}{q^4}{c-2b}
	}{
		\qpoch{q^2}{q^2}{c-2b}
	}
	} \text{ for } c\geq2b+1, \label{eq:quarter0} \\
\gftwo{b, c} &= 
	2^{2b+1-c}q^{b\pas{2b+1}-\frac12\pas{c-1}c}
	\frac{
		\qpoch{q^{4b+4}}{q^4}{c-2b-1}
	}{
		\qpoch{q^2}{q^2}{c-2b-1}
	} \label{eq:quarter1}.
\end{align}
Here, we shall only consider \eqref{eq:quarter1}:
Lais consideration of ``increasing'' quartered hexagons would correspond to replacing
$b\to b+x$ and $c\to c+2x$ for some nonnegative integer $x$.
From \eqref{eq:quarter1} we see that
the generating function of all lattice paths with initial point $\pas{2x+1,x}$ and terminal point $\pas{2x+a,0}$
is zero for $a<1$, otherwise it is
$$
\frac{2^{1-a} q^{-2 a x-\frac{1}{2} (a-1) a+2 x} \left(q^{4 x+4};q^4\right){}_{a-1}}{\left(q^2;q^2\right){}_{a-1}}.
$$
Hence the generating function of all families of $m$ non--intersecting lattice paths with
\bit
\item initial points $\pas{2\pas{x+i-1}+1,x+i-1}$, $i=1,2,\dots m$ 
\item and terminal points  $\pas{2x+a_j,0}$, $j=1,2,\dots m$, 
\eit
where ${\mathbf a} = \pas{a_j}_{j=1}^m$ is an increasing sequence of
nonnegative integers, is given as
\begin{multline}
\label{eq:quarter-odd-gf}
2^{m^2-\sum _{j=1}^m a_j} q^{\frac{1}{2} \sum _{j=1}^m \left(a_j-a_j^2\right)+\frac{1}{6} \left(4 m^3-3 m^2-m\right)} \left(q^{2 x}\right)^{m^2-\sum_{j=1}^m a_j}\\
\times
\det\pas{
\Iverson{a_j\geq 2i-1}\cdot
\frac{
\qpoch{q^{4i+4x}}{q^4}{a_j+1-2i}
}{
\qpoch{q^{2}}{q^2}{a_j+1-2i}
}
}_{i,j=1}^{m}
\end{multline}

Here, we used Iverson's bracket 
$$
\Iverson{\text{some assertion}} =
\begin{cases}
1 &: \text{ if the assertion is true}, \\
0 &: \text{ otherwise}
\end{cases}
$$
to point out the obvious condition that a lattice path cannot have its end point to the left of its starting point;
but note that by \eqref{eq:qpochminus}, we could also \EM{omit} Iverson's bracket.

Clearly, the crucial point in \eqref{eq:quarter-odd-gf} is the 
\EM{determinant}: Observe that the determinant is \EM{zero} if and only if $a_j< 2j-1$ for some $j$, $1\leq j\leq m$. 

\secA{Evaluation of the determinant}
\begin{dfn}
We call a finite sequence of integers $\mathbf a = \pas{a_1,a_2,\dots,a_m}$ 
\EM{admissible}
if it is \EM{strictly increasing} and has the additional property 
$$2i-1\leq a_i\leq2m\text{ for all }i, 1\leq i\leq m.$$
We call an admissible seqence \EM{irreducible} if it obeys the stricter condion
$$2i+1\leq a_i\leq2m\text{ for all }i, 1\leq i< m, \text{ and } a_m=2m.$$
\end{dfn}
\begin{rem}
Admissible sequences are yet another type of objects enumerated by the ubiquituous Catalan--numbers:
The number of admissible sequences of length $m-1$ is the Catalan number $C_{m}=\frac1{m+1}\binom{2m}{m}$.
One way to prove this is by giving a simple
bijection with Dyck paths of length $2m$ (i.e., with lattice
paths from $\pas{0,0}$ to $\pas{2m,0}$ which consist of diagonal upwards and downwards steps, but never
fall below level $0$), which can be easily ``seen'' by just looking at a picture; see \figref{fig:dyck-path}.
%
\end{rem}
\begin{figure}%
\begin{center}%
\psset{unit=0.4cm}
\begin{pspicture}(-1.0,-7.5)(11.0,5.5)
\psset{linewidth=0.05,fillstyle=none,linecolor=lightgray}
\pspolygon[fillstyle=solid,fillcolor=lightgray](0,0)(3,3)(4,2)(6,4)(7,3)(9,1)(1,-7)(0,-6)(1,-5)(0,-4)(1,-3)(0,-2)(1,-1)
\psset{linewidth=0.175,fillstyle=none,linecolor=gray}
\psline(0,0)(3,3)(4,2)(6,4)(7,3)(10,0)
\psset{linewidth=0.02,fillstyle=none,linecolor=black}
\psline{->}(-1,0)(11,0)
\psline{->}(0,-7)(0,5)
\psset{linewidth=0.05,fillstyle=none,linecolor=black}
\psline{<->}(0.5,-0.5)(3.5,2.5)
\rput{45}(2,1.5){{\scriptsize\blue $3$}}
\psline{<->}(0.5,-2.5)(6.5,3.5)
\rput{45}(3,0.5){{\scriptsize\blue $6$}}
\psline{<->}(0.5,-4.5)(7.5,2.5)
\rput{45}(4,-0.5){{\scriptsize\blue $7$}}
\psline{<->}(0.5,-6.5)(8.5,1.5)
\rput{45}(5,-1.5){{\scriptsize\blue $8$}}
\end{pspicture}%
\end{center}%
{{\small The picture illustrates the bijection for the admissible sequence $\mfseq=\pas{3,6,7,8}$ of length $4$ and a Dyck
path of length $2\cdot 5=10$: For every down--step of the Dyck path except the last, draw a rectangle with one side
equal to this down--step and one corner ``leaning against the vertical axis''; the heights of these rectangles give an admissible sequence; and any admissible sequence ``encodes'' a Dyck path.}}
\caption{Illustration of the bijection between Dyck paths of length $2m$ and admissible sequences of length $m-1$.}%
\label{fig:dyck-path}%
\end{figure}%

\begin{thm}
\label{thm:det-identity}
Let $\mathbf a = \pas{a_1,a_2,\dots,a_m}$ be an \EM{admissible} sequence,
and define two $m\times m$--matrices
$\mfmat{\mfseq,x}$ and $\mfsimple\mfseq$ as follows:
\begin{equation}
\label{eq:def-two-matrices}
\mfmat{\mfseq,x}
\defeq
\pas{\frac{
\qpoch{q^{2i+2x}}{q^2}{a_j+1-2i}
}{
\qpoch{q}{q}{a_j+1-2i}
}
}_{i,j=1}^{m}
\text{ and }
\mfsimple\mfseq
\defeq
\pas{\frac{
1
}{
\qpoch{q}{q}{a_j+1-2i}
}
}_{i,j=1}^{m}
\end{equation}
(Note that the entries in \eqref{eq:def-two-matrices} are \EM{zero} whenever $a_j<2i-1$.)

Then we have
\begin{equation}
\label{eq:det-identity}
\det\of{\mfmat{\mfseq,x}} =
\pas{\prod_{k=1}^m\qpoch{q^{2x+2k}}{q}{a_k-2k+1}}
\det\of{\mfsimple\mfseq}.
\end{equation}
\end{thm}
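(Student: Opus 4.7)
The plan is to prove the identity by the method of identification of polynomial factors in the variable $z := q^{2x}$. By the product formula
\[
\mathbf{m}_{ij} = \frac{\prod_{l=0}^{a_j - 2i}(1 - q^{2i+2l} z)}{(q;q)_{a_j+1-2i}},
\]
every entry is a polynomial in $z$ of degree $a_j + 1 - 2i$, so every nonzero term of the Leibniz expansion of $\det(\mathbf{m}(\mathbf{a},x))$ has the same $z$-degree $\sum_j a_j - m^2$. Thus $\det(\mathbf{m}(\mathbf{a},x))$ is a polynomial in $z$ of degree at most $\sum_j a_j - m^2$, which matches $\deg_z P$ exactly. At $z = 0$ all the numerator Pochhammer symbols collapse to $1$, whence $\det(\mathbf{m}(\mathbf{a},x))|_{z=0} = \det(\mathbf{s}(\mathbf{a}))$ and $P|_{z=0} = 1$. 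Therefore, once one establishes that $P$ divides $\det(\mathbf{m}(\mathbf{a},x))$ in $\mathbb{C}[z]$, the quotient is forced by degree to be a constant, and that constant must be $\det(\mathbf{s}(\mathbf{a}))$ --- which is the claim.

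To establish $P \mid \det(\mathbf{m}(\mathbf{a},x))$ I plan to match the irreducible linear factors one at a time. The factor $(1 - q^p z)$ occurs in $P$ with multiplicity $\mu(p) := \#\{k : 2k \le p \le a_k\}$, so it suffices to exhibit, at the specialisation $z = q^{-p}$, exactly $\mu(p)$ independent linear dependencies among the rows (equivalently the columns) of the specialised matrix. For \emph{even} $p$ this is comparatively easy: the entry $\mathbf{m}_{ij}$ vanishes at $z = q^{-p}$ whenever $i \le p/2$ and $a_j \ge p/2 + i$, and a careful book-keeping of this staircase of zeros yields the required rank drop. For \emph{odd} $p$ no entry of $\mathbf{m}$ vanishes at $z = q^{-p}$, so the rank deficiency must be built from genuine linear combinations; a direct inspection of the ratio $\mathbf{m}_{ij}/\mathbf{m}_{i,j-1}$ at $z = q^{-p}$ already reveals that two consecutive columns coincide whenever $a_{j-1} = p$ and $a_j = p+1$, which hints at a systematic way of generating the required dependencies.

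The principal obstacle is the odd-$p$ case: producing the complete set of $\mu(p)$ dependencies at $z = q^{-p}$ in a uniform fashion will require some additional identity, which I expect to come from a specialisation of the $q$-Chu--Vandermonde sum applied to the Pochhammer symbols of neighbouring columns. As a fallback, one can first use Laplace expansion to restrict the hard work to sequences satisfying $a_1 \ge 2$ \emph{and} $a_m = 2m$: when $a_1 = 1$, column $1$ of $\mathbf{m}(\mathbf{a},x)$ is $(1,0,\ldots,0)^{\top}$ and the minor is (after reindexing) $\mathbf{m}(\mathbf{a}'',x+1)$ with $\mathbf{a}''=(a_2-2,\ldots,a_m-2)$, while when $a_m = 2m-1$ the last row is $(0,\ldots,0,1)$ and the minor is $\mathbf{m}((a_1,\ldots,a_{m-1}),x)$; both reductions are compatible with the right-hand side (because $(q^{2x+2};q)_{a_1-1}=1$ and $(q^{2x+2m};q)_{a_m-2m+1}=1$, respectively, and the analogous reduction holds for $\mathbf{s}(\mathbf{a})$), so by induction on $m$ the genuine work is confined to admissible sequences with $a_1 \ge 2$ and $a_m = 2m$, to which the polynomial-factor argument then applies.
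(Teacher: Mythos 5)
Your overall framework is the standard ``identification of factors'' method and it is genuinely different from the paper's proof (which pulls the product out of rows and columns and then triangulates the resulting matrix by an $x$--independent unipotent lower triangular matrix, constructed by solving homogeneous linear systems with the help of a $q$--binomial lemma). Your bookkeeping of the easy parts is correct: writing $z=q^{2x}$, every nonzero entry of $\mfmat{\mfseq,x}$ is a polynomial in $z$ of exact degree $a_j+1-2i$, so $\det\of{\mfmat{\mfseq,x}}$ has $z$--degree at most $\sum_j a_j-m^2$, which equals the degree of the product $P=\prod_k\qpoch{q^{2x+2k}}{q}{a_k-2k+1}$; the evaluation at $z=0$ gives $\det\of{\mfsimple\mfseq}$ on one side and $1$ on the other; and the even--$p$ rank drop does follow from the zero pattern, because for $p=2s$ the rows $i\leq s$ vanish on all columns $j$ with $a_j\geq 2s$, and admissibility ($a_j\geq 2j-1$) guarantees that this block of zeros forces a rank deficiency of at least $\mu(2s)=\#\setof{k\leq s: a_k\geq 2s}$. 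The Laplace reductions for $a_1=1$ and $a_m=2m-1$ are also consistent with the right--hand side.

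However, there is a genuine gap, and it sits exactly where the whole difficulty of the theorem lies: you do not produce the $\mu(p)$ linear dependencies at $z=q^{-p}$ for \emph{odd} $p$. At an odd specialisation no entry vanishes, so the entire rank deficiency must come from explicit column (or row) relations; you exhibit only a single relation, and only under the special hypothesis that both $p$ and $p+1$ occur as consecutive entries of $\mfseq$, which need not happen. The ``additional identity'' you hope to extract from $q$--Chu--Vandermonde is precisely the nontrivial content of the theorem --- it plays the same role as Lemma~\ref{lem:qbinom-identity} and the inductive construction of $x$--invariant solutions in the paper's argument --- and until it is stated and proved, the divisibility $P\mid\det\of{\mfmat{\mfseq,x}}$ is not established and the proof is incomplete. (A secondary point: the reduction to $a_1\geq 2$ and $a_m=2m$ does not remove the odd--$p$ obstacle; it only shrinks the class of sequences on which you must face it.)
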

Observe that substituting $q\to q^2$ in $\mfmat{\mfseq,x}$ gives precisely the determinant
in \eqref{eq:quarter-odd-gf}, so by the weight–preserving bijection between lozenge tilings
and families of non–intersecting lattice paths, 
\eqref{eq:det-identity} yields Lai's observation
\cite[Theorem 1.3]{Lai:2020:ROTGFOSAQH}.

\begin{rem}
The condition $a_m\leq 2m$ in \thmref{thm:det-identity} is crucial; otherwise the assertion is wrong, already for $m=1$.
\end{rem}

The rest of this section is devoted to the proof of \thmref{thm:det-identity}: It consists of several steps,
which we indicate by corresponding headlines.

\step{Reduce the general determinant evaluation to irreducible sequences}
Observe that if there is some $1\leq k<m$ such that $a_k\leq 2k$,
then the $\pas{i,j}$--entry of $\mfmat{\mfseq,x}$ 
is zero for $i>k$ and $j<k$, so the determinant can be written as the product of two minors consisting
\bit
\item of the first $k$ rows and columns of $\mfmat{\mfseq,x}$,
\item and of the last $m-k$ rows and columns of $\mfmat{\mfseq,x}$,
\eit
respectively. It is easy to see
that these minors correspond to the lists $\mfseq^\prime=\pas{a_1,\dots,a_k}$ and
$\mfseq^\pprime=\pas{a_{k+1}-2k,\dots,a_{m}-2k}$, which gives the following factorization:
\begin{equation*}
\det\of{\mfmat{\mfseq,x}} = 
\det\of{\mfmat{\mfseq^\prime,x}}
\cdot
\det\of{\mfmat{\mfseq^\pprime,x+k}}.
\end{equation*}
Both sequences $\mfseq^\prime$ and $\mfseq^\pprime$ are admissible, and if we manage to prove \thmref{thm:det-identity}
for the \EM{irreducible} case
$$a_k\geq 2k+1\text{ for all }k=1,\dots,m-1,$$
it is easy to see that  the factorization above gives the general assertion \eqref{eq:det-identity} for all admissible sequences (by induction
on the number of ``irreducible factors'').

Note that an irreducible sequence $\mfseq$ of length $m$ necessarly ends with
$$\mfseq=\pas{\dots, a_{m-1}=2m-1,a_m=2m}.$$

\step{Pull out common factors from rows and columns}
We may rewrite the factors of the product in \eqref{eq:det-identity} as follows
$$
\prod_{k=1}^m\qpoch{q^{2x+2k}}{q}{a_k-2k+1}
=\frac{\prod_{j=1}^m\qpoch{q^{2x}}{q}{a_j+1}}{\prod_{i=1}^m\qpoch{q^{2x}}{q}{2i}}
$$
and pull them out from the rows $i$ and columns $j$ of
$\mfmat{\mfseq,x}$. This operation gives a new $m\times m$--matrix $\mfmatprime{\mfseq,x}$ with $\pas{i,j}$--entry
\begin{equation}
\label{eq:simplified-entry}
\frac{
\qpoch{q^{2x}}{q}{2i}\qpoch{q^{2i+2x}}{q^2}{a_j+1-2i}
}{
\qpoch{q^{2x}}{q}{a_j+1}\qpoch{q}{q}{a_j+1-2i}
}
=
\frac{
\qpoch{q^{2x+2i}}{q^2}{a_j+1-2i}
}{
\qpoch{q^{2x+2i}}{q}{a_j+1-2i}\qpoch{q}{q}{a_j+1-2i}
}, 
\end{equation}
and \eqref{eq:det-identity} clearly is equivalent to
\begin{equation}
\label{eq:equivalent-det-identity}
\det\of{\mfmatprime{\mfseq,x}} = \det\of{\mfsimple\mfseq}.
\end{equation}

\step{The modified matrix $\mfmatprime{\mfseq,x}$ can be triangulating by a lower triangular matrix $\mffac\mfseq$ with entries from the field $\C\of q$ of rational functions in $q$}
Denote by $\ratfield$ the field $\C\of q$ of rational functions in $q$.
\begin{lem}
\label{lem:triangulization}
Let $\mfseq$ be an irreducible sequence, and consider $\mfmatprime{\mfseq,x}$ as defined in \eqref{eq:simplified-entry}.
Then there exists a \EM{lower triangular} $m\times m$--matrix $\mffac\mfseq$
whose entries are \EM{constants} from $\ratfield$ 
(i.e., the entries \EM{do not depend} on $x$) with all entries
on the main diagonal equal to $1$, such that the matrix product
\begin{equation}
\label{eq:mat-prod}
\mfmatprime{\mfseq,x}\cdot\mffac\mfseq
\end{equation}
is \EM{upper triangular}, with \EM{constants} from $\ratfield$ on its
main diagonal.
\end{lem}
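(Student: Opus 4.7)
I would prove Lemma~\ref{lem:triangulization} by constructing $\mffac\mfseq$ explicitly, column by column, working from the rightmost column of $\mfmatprime{\mfseq,x}$ leftward---that is, Gaussian elimination applied from the right. Using the irreducibility of $\mfseq$ (so $a_{m-1}=2m-1$ and $a_m=2m$), a direct computation from \eqref{eq:simplified-entry} shows that the bottom row of $\mfmatprime{\mfseq,x}$ is $(0,\dots,0,1,\frac{1}{1-q})$: the zero entries come from $a_j+1-2m<0$ for $j<m-1$, while in the remaining two entries the Pochhammer factors in numerator and denominator cancel massively and leave scalars in $\ratfield$. Consequently, taking the last column of $\mffac\mfseq$ to be the standard basis vector makes the $(m,m)$-entry of the product the constant $\frac{1}{1-q}$, and the single coefficient $F_{m,m-1}=-(1-q)\in\ratfield$ suffices to annihilate the $(m,m-1)$-entry of the product.

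For the inductive step I would assume that the last $m-j$ columns of $\mfmatprime{\mfseq,x}\cdot\mffac\mfseq$ have already been made upper triangular using scalar column coefficients from $\ratfield$, with each diagonal entry a nonzero constant in $\ratfield$. The coefficients $F_{j+1,j},\dots,F_{m,j}$ required to zero out the bottom $m-j$ entries of column $j$ of the product are then uniquely determined by back-substitution, since the trailing $(m-j)\times(m-j)$ sub-block is upper triangular with invertible diagonal. What remains to verify is that these coefficients, together with the resulting $(j,j)$-entry of the product, lie in $\ratfield$ rather than in $\ratfield(q^x)$.

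This $x$-independence is the main obstacle. I would tackle it by writing the entries of $\mfmatprime{\mfseq,x}$ as rational functions of $y\defeq q^{2x}$: after clearing a suitable common denominator in each row, the entries become polynomials in $y$ whose structure is tightly controlled by the Pochhammer symbols appearing in \eqref{eq:simplified-entry}. The driving algebraic mechanism is the telescoping relation $\qpoch{q^{2x+2i}}{q^2}{n+1}=(1-q^{2x+2i+2n})\qpoch{q^{2x+2i}}{q^2}{n}$ together with its $q$-analogue for $\qpoch{q^{2x+2i}}{q}{n}$; these force precisely the kind of cancellation of the $(1-q^{2x+2m-1})$-type factor that one already observes by hand in the $m=2$ case. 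A possibly tidier alternative is to guess a closed form for $\mffac\mfseq$ from small $m$ (the irreducibility assumptions pin down several entries of the bottom-right block immediately) and then verify the triangulation by a direct application of the $q$-Chu--Vandermonde summation, thereby bypassing the column-by-column induction altogether. Either route, the algebraic input is the same $q$-identity, and the hardest part will be packaging its repeated application so that the cancellations occur simultaneously in all rows.
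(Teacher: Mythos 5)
Your setup is sound and in fact coincides with the paper's: the bottom row of $\mfmatprime{\mfseq,x}$ is indeed $\pas{0,\dots,0,1,\tfrac1{1-q}}$ for an irreducible sequence, and determining column $j$ of $\mffac\mfseq$ amounts to solving the homogeneous system $\mainminor{\mfmatprime{\mfseq,x}}{j+1}{j}\cdot{\mathbf v}_j={\mathbf 0}$ and normalizing the first entry to $1$ --- which is exactly how the paper proceeds. You also correctly identify the crux: the elimination coefficients exist for each fixed $x$ by generic linear algebra, but the lemma requires them to lie in $\ratfield$, independent of $x$.

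That crux, however, is where your proposal stops being a proof. ``The telescoping relation forces precisely the kind of cancellation one observes in the $m=2$ case'' and ``verify by $q$-Chu--Vandermonde after guessing a closed form'' are statements of hope, not arguments: no closed form for $\mffac\mfseq$ is produced (the paper never produces one either --- its proof is non-constructive), and telescoping the Pochhammer symbols does not by itself explain why a solution of a linear system with genuinely $x$-dependent coefficients can be chosen $x$-free. To see the difficulty concretely: after the paper's normalization \eqref{eq:funfact1}, row $i$ of the relevant matrix is, entrywise, a constant in $\ratfield$ times a product of $m-i$ factors $\pas{1-q^{2x+y_{j,k}}}$, i.e.\ a polynomial of degree $m-i$ in $q^{2x}$; an $x$-invariant solution must annihilate all $m-i+1$ coefficient vectors of every row simultaneously, which is naively far more conditions than unknowns. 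The paper's actual mechanism is (a) the $q$-binomial identity of \lemref{lem:qbinom-identity} and \corref{cor:x_independent_solution}, which shows that $m-i+2$ consecutive ``shifts in $x$'' of such a row are linearly dependent, so an $x$-invariant solution \emph{exists}; (b) the symmetry \eqref{eq:halfinteger-evaluation} relating row $i$ at $x=\tfrac12-k$ to row $k$ at $x=\tfrac12-i$; and (c) a dimension count by induction from the bottom row upward showing that \emph{every} solution is $x$-invariant, which is what ultimately guarantees property 3 (the constant diagonal). None of these ingredients, or substitutes for them, appear in your sketch, so the proof has a genuine gap at its central step.
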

If we manage to prove \lemref{lem:triangulization}, then by the multiplicativity of the determinant, we would obtain that
$$\det\of{\mfmatprime{\mfseq,x}\cdot\mffac\mfseq} = \det\of{\mfmatprime{\mfseq,x}}\cdot\det\of{\mffac\mfseq} = \det\of{\mfmatprime{\mfseq,x}},$$
and this determinant is equal to the product of the entries on the main diagonal of \eqref{eq:mat-prod}; so, in particular,
it \EM{does not depend on $x$}.
(This would already be sufficient to deduce Lai's observation concerning the quotient of generating functions \cite[Theorem 1.3]{Lai:2020:ROTGFOSAQH}.)
Moreover, for $q\in\C$ with $\abs q < 1$, \eqref{eq:equivalent-det-identity} would also hold \EM{in the limit $x\to\infty$},
so \eqref{eq:det-identity} follows since
$$
\lim_{x\to\infty}
\frac{
\qpoch{q^{2x+2i}}{q^2}{a_j+1-2i}
}{
\qpoch{q^{2x+2i}}{q}{a_j+1-2i}\qpoch{q}{q}{a_j+1-2i}
}
=
\frac{
1}{
\qpoch{q}{q}{a_j+1-2i}
}
\text{ for } \abs q < 1.
$$
So, the proof of \thmref{thm:det-identity} would be \EM{complete} if we can prove \lemref{lem:triangulization}.

\step{Finding the triangulating matrix $\mffac\mfseq$ amounts to solving $m-1$ systems of linear equations}
For some matrix $\mathbf n$, let us denote by $\mainminor{\mathbf n}st$ the \EM{submatrix} of $\mathbf n$ consisting
\bit
\item
of all rows $i\geq s$
\item
and all columns $j\geq t$.
\eit

The $k$--th column vector in a \EM{lower triangular} $m\times m$--matrix has, by definition, zero entries at
all positions $j<k$, so determining the sub--vector ${\mathbf v}_k$ corresponding to the ``interesting'' entries
of this column vector in the \EM{lower triangular} $m\times m$--matrix $\mffac\mfseq$
amounts to finding a solution of the homogeneous system of linear equations
$$
\mainminor{\mfmatprime{\mfseq,x}}{k+1}k \cdot {\mathbf v}_k = {\mathbf 0},
$$ 
which has the following \EM{additional properties}:
\begin{enumerate}
\item the entries in ${\mathbf v}_k$ are elements of $\ratfield$ which \EM{do not depend on $x$}: We
	call such solution \EM{$x$--invariant},
\item the first entry of ${\mathbf v}_k$ is not zero, whence we can divide ${\mathbf v}_k$ by this first entry (so
	the lower triangular matrix $\mffac\mfseq$ would have entries equal to $1$ on its main diagonal, as desired),
\item the inner product of the first row of $\mainminor{\mfmatprime{\mfseq,x}}kk$ with ${\mathbf v}_k$ is 
	in $\ratfield$ and \EM{does not depend on $x$}, too.
\end{enumerate}

Clearly, we have to find such solutions for $k=1,2,\dots,m-1$.


\step{Observation: It suffices to consider a \EM{single} system of linear equations}

It is easy to see that the submatrix $\mainminor{\mfmatprime{\mfseq,x}}22$
is precisely the $\pas{m-1}\times \pas{m-1}$--matrix $\mfmatprime{\overline{\mfseq},x+1}$,
where 
$$
\overline{\mfseq} = \pas{a_2-2,a_3-2,\dots,a_m-2}.
$$
Hence it suffices to consider only the \EM{first} column vector ${\mathbf v}_1$ of the matrix $\mffac{\mfseq}$.
This means that we have to show that the system of equations 
\begin{equation}
\label{eq:mprime-system}
\mainminor{\mfmatprime{\mfseq,x}}21 \cdot {\mathbf v}_1 = {\mathbf 0}
\end{equation}
has a solution with the additional properties 1, 2 and 3 from above.

\step{Express the single system of linear equations more conveniently}

Observe that we can express property 2 more conveniently: \EM{Prepend} $1$ to the \EM{irreducible} sequence
$\mfseq$,
i.e., consider the (admissible!) sequence
$$
\mfseq^\star \defeq\pas{1, a_1, \dots, a_m}
$$
(so $a^\star_1=1$ and $a^\star_k = a_{k-1}$ for $2\leq k\leq m+1$),
and define the $m\times\pas{m+1}$--matrix $\mfmatprime{\mfseq^\star,x}$ with entries as in \eqref{eq:simplified-entry},
i.e.:
\begin{equation}
\label{eq:simplified-starred-entry}
\mfmatprime{\mfseq^\star,x}_{i,j}
=
\frac{
\qpoch{q^{2x+2i}}{q^2}{a^\star_j+1-2i}
}{
\qpoch{q^{2x+2i}}{q}{a^\star_j+1-2i}\qpoch{q}{q}{a^\star_j+1-2i}
}.
\end{equation}
Note that $\mfmatprime{\mfseq^\star,x}_{i,1}=\Iverson{i=1}$: Hence an \EM{$x$--invariant} solution of
\begin{equation}
\label{eq:starred-equation}
\mfmatprime{\mfseq^\star,x}\cdot{\mathbf v}_1={\mathbf 0}
\end{equation}
is an $x$--invariant solution of \eqref{eq:mprime-system} with property 2.

Assume that we can show that \eqref{eq:starred-equation} has an $x$--invariant solution: Then such solutions
would determine a matrix $\mffac\mfseq$ which \EM{might have} zeros on the main diagonal; but this cannot
happen since we \EM{know} that $\det\of{\mfmatprime{\mfseq,x}}\neq0$ (as it is the generating
function of a \EM{non--empty} set of lattice paths). This means that such solutions ``automatically'' have property 3:
So the proof of \lemref{lem:triangulization} boils down to showing that \eqref{eq:starred-equation} has an $x$--invariant solution.
%

\step{Divide all equations by the coefficient corresponding to the last column}
The system \eqref{eq:starred-equation} features $m$ equations for $m+1$ variables, so for every
fixed $x$ we expect a solution space of dimension $\geq 1$. 
As pointed out above, element $a_m$ equals $2m$ in an \EM{irreducible} sequence of length $m$.
By dividing the $i$--the equation (i.e., row $i$ of the 
matrix $\mfmatprime{\mfseq^\star,x}$) by the (non--zero) coefficient
corresponding to $a^\star_{m+1}=a_m=2m$
$$
\frac{
\qpoch{q^{2x+2i}}{q^2}{2m+1-2i}
}{
\qpoch{q^{2x+2i}}{q}{2m+1-2i}\qpoch{q}{q}{2m+1-2i}
}
$$
we do not change the space of solutions, but obtain a modified coefficient matrix $\mfmatpprime{\mfseq^\star,x}$
with $\pas{i,j}$--entry
\begin{multline}
\label{eq:modified-matrix}
\frac{
\qpoch{q^{2x+2i}}{q^2}{a_j+1-2i}
\qpoch{q^{2x+2i}}{q}{2m+1-2i}\qpoch{q}{q}{2m+1-2i}
}{
\qpoch{q^{2x+2i}}{q^2}{2m+1-2i}
\qpoch{q^{2x+2i}}{q}{a_j+1-2i}\qpoch{q}{q}{a_j+1-2i}
}
\\ =
\frac{
\qpoch{q^{2+a_j-2i}}{q}{2m-a_j}\qpoch{q^{1+a_j+2x}}{q}{2m-a_j}
}{
\qpoch{q^{2+2a_j+2x-2i}}{q^2}{2m-a_j}
}.
\end{multline}

\step{Consider a single system of linear equations with \EM{more} variables}
It is convenient to view the coefficient matrix $\mfmatpprime{\mfseq^\star,x}$ 
as the \EM{submatrix} corresponding to columns $a^\star_1,a^\star_2, \dots, a^\star_{m+1}$
of the $m\times2m$--matrix with $\pas{i,j}$--entry
$$
\frac{
\qpoch{q^{2+j-2i}}{q}{2m-j}\qpoch{q^{1+j+2x}}{q}{2m-j}
}{
\qpoch{q^{2+2j+2x-2i}}{q^2}{2m-j}
},
$$
and to reverse the order of columns, i.e., to consider the $m\times 2m$--matrix $\mfmatppprime\of{m,x}$
with $\pas{i,j}$--entry
\begin{equation}
\label{eq:mppp}
\mfentry{m,i,j,x}\defeq
\frac{
\qpoch{q^{2\pas{m-i+1}-j}}{q}{j}\qpoch{q^{2\pas{m+x}-j+1}}{q}{j}
}{
\qpoch{q^{2\pas{2m+x-i+1}-2j}}{q^2}{j}
},
\end{equation}
where $1\leq i\leq m$ and $0\leq j \leq 2m-1$.

Observe that the factor $\qpoch{q^{2\pas{m-i+1}-j}}{q}{j}$ in \eqref{eq:mppp} is \EM{zero} for $j\geq2\pas{m-i+1}$, hence
the matrix $\mfmatppprime\of{m,x}$ has a ``staircase shape'', with all entries of row $i$ 
equal to \EM{zero} for $j\geq2\pas{m-i+1}$. To illustrate this, we present $\mfmatppprime\of{3,x}$ (after expansion of the
$q$--Pochhammer symbols involving the parameter $x$ \EM{and cancellation}; for typesetting
reasons, we give the \EM{transpose} of this matrix):
{\small
\begin{equation}
\label{eq:ex-mppp}
\pas{\mfmatppprime\of{3,x}}^T =
\begin{pmatrix}
 1 &  1 &  1 \\
 \frac{\qpoch{q^5}q1 \left(1-q^{2 x+6}\right)}{1-q^{2 x+10}} 
 &  \frac{\qpoch{q^3}q1 \left(1-q^{2 x+6}\right)}{1-q^{2 x+8}}
 &  1-q \\
 \frac{\qpoch{q^4}q2 \left(1-q^{2 x+5}\right) \left(1-q^{2 x+6}\right)}{\left(1-q^{2 x+8}\right) \left(1-q^{2 x+10}\right)} & 
  \frac{\qpoch{q^2}q2 \left(1-q^{2 x+5}\right)}{1-q^{2 x+8}} &  0 \\
 \frac{\qpoch{q^3}q3 \left(1-q^{2 x+4}\right) \left(1-q^{2 x+5}\right)}{\left(1-q^{2 x+8}\right) \left(1-q^{2 x+10}\right)} &  \frac{\qpoch{q}q3 \left(1-q^{2 x+5}\right)}{1-q^{2 x+8}} &  0 \\
 \frac{\qpoch{q^2}q4 \left(1-q^{2 x+3}\right) \left(1-q^{2 x+5}\right)}{\left(1-q^{2 x+8}\right) \left(1-q^{2 x+10}\right)} &  0
   &  0 \\
 \frac{\qpoch{q}q5 \left(1-q^{2 x+3}\right) \left(1-q^{2 x+5}\right)}{\left(1-q^{2 x+8}\right) \left(1-q^{2
   x+10}\right)} &  0 &  0
\end{pmatrix}
\end{equation}
}

\step{A useful observation concerning \eqref{eq:mppp}}


The attentive reader may have noticed a phenomenon in the example \eqref{eq:ex-mppp}: The number
of factors in the denominator is \EM{not increasing all the way}, but apparently becomes \EM{constant} from
$j=m-i$ on. This is no coincidence, but due to certain \EM{cancellations}: Observe that the factors depending on $x$ in the numerator of \eqref{eq:mppp}
end with $\pas{1-q^{2m+2x}}$, 
and from $j=m-i+1+t$ on ($t=0,1,\dots$), the
``additional factors'' $\pas{1-q^{2m+2x-2t}}$ appearing in the denominator cancel out with corresponding factors in
the numerator, such that in the last two entries of each row only factors with \EM{odd} ``$x$--depending $q$--exponents''
$\pas{1-q^{2x+2t-1}}$ survive the cancellation in the numerator; moreover, the last entry is $\pas{1-q}$--times the next--to--last entry.
Altogether, this implies: If we multiply every row $i$ of $\mfmatppprime\of{m,x}$ 
by the denominator of its last non--zero
entry
$$\qpoch{q^{2x+2m+2}}{q^2}{m-i},$$
then the $\pas{i,j}$--entry of the resulting matrix is equal to $\qpoch{q^{2\pas{m-i+1}-j}}{q}{j}$ times a product of $\pas{m-i}$ factors
of the form $\pas{1-q^{2x+y}}$, i.e.,
\begin{equation}
\label{eq:funfact1}
\mfmatppprime\of{m,x}_{i,j}\cdot\qpoch{q^{2x+2m+2}}{q^2}{m-i}
=
\qpoch{q^{2\pas{m-i+1}-j}}{q}{j}\cdot\prod_{k=1}^{m-i}\pas{1-q^{2z+y_{j,k}}} 
\end{equation}
for certain integers $y_{j,k}$.

\step{Claim: The first homogeneous equation of \eqref{eq:starred-equation} actually \EM{has} a non--trivial $x$--invariant solution}

It might not be clear at first sight that there is a non--trivial $x$--invariant solution \EM{at all}; even for a \EM{single} equation
of \eqref{eq:starred-equation}:
In order to show that this is indeed the case, we state and prove a little Lemma.

Recall the definition of the $q$--binomial coefficient
\begin{equation}
\qbinom nk \defeq\frac{\qpoch qqn}{\qpoch qqk \qpoch qq{n-k}}
\end{equation}
for non--negative integers $n,k$ with $0\leq k\leq n$, and the well--known recursions
\begin{align}
\qbinom nk &= \qbinom{n-1}{k-1} + q^k\qbinom{n-1}{k}, \label{eq:qbinom-rec-1} \\
\qbinom nk &= q^{n-k}\qbinom{n-1}{k-1} + \qbinom{n-1}{k}. \label{eq:qbinom-rec-2}
\end{align}
An easy consequence of these recursions is the well--known identity
\begin{equation}
\label{eq:1-1^n}
\sum_{k=0}^n \pas{-1}^k q^{\frac{k\pas{k+1}}2 - k n}\qbinom nk = \Iverson{n=0}.
\end{equation}
Now define
$$
e\of{n,k}\defeq \frac{k\pas{k+1}}2 - k n
$$
and observe that \eqref{eq:1-1^n} can be generalized as follows:
\begin{lem}
\label{lem:qbinom-identity}
Let $n,r$ be positive integers, and let $\pas{\gamma_1,\gamma_2,\dots,\gamma_{n-r}}$ be an (arbitrary) sequence of $n-r$ complex numbers.
Then we have: 
\begin{equation}
\label{eq:qbinom-identity}
\sum_{k=0}^n\pas{-1}^k q^{\qexp nk}\qbinom nk\prod_{i=1}^{n-r}\pas{1-\gamma_i\cdot q^{k}} = 0.
\end{equation}
\end{lem}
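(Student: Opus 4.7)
The plan is to reduce the identity to a family of simpler sums that vanish by an inductive calculation in the spirit of \eqref{eq:1-1^n}. First I would expand the product
$$\prod_{i=1}^{n-r}\pas{1-\gamma_i\cdot q^k} = \sum_{j=0}^{n-r} c_j\cdot q^{jk},$$
where $c_j = \pas{-1}^j e_j\pas{\gamma_1,\dots,\gamma_{n-r}}$ is (up to sign) the $j$-th elementary symmetric polynomial in the $\gamma_i$. The point is that the coefficients $c_j$ do not depend on $k$, so substituting into the left-hand side of \eqref{eq:qbinom-identity} and interchanging summations reduces the lemma to showing that
$$T_j\of n \defeq \sum_{k=0}^n \pas{-1}^k q^{\qexp nk + jk}\qbinom nk = 0$$
for every $j$ with $0\leq j\leq n-r$. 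Since $r\geq 1$, this range is contained in $\set{0,1,\dots,n-1}$, so it suffices to establish $T_j\of n = 0$ for all $0\leq j\leq n-1$.

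For this, I would derive the one-step recursion
$$T_j\of n = \pas{1 - q^{j-n+1}}\,T_j\of{n-1} \qquad \text{for } n\geq 1.$$
This follows by applying the $q$-binomial recursion \eqref{eq:qbinom-rec-1} to $\qbinom nk$, splitting $T_j\of n$ into two sums, and performing the shift $k\mapsto k+1$ in the sum involving $\qbinom{n-1}{k-1}$. A short computation based on the elementary relations $\qexp n{k+1}-\qexp nk = k+1-n$ and $\qexp nk = \qexp{n-1}k - k$ shows that both sums produce the common factor $q^{\qexp{n-1}k + jk}\qbinom{n-1}k$ multiplied by $\pas{1-q^{j-n+1}}$ after collecting terms.

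Iterating this recursion starting from the trivial value $T_j\of 0 = 1$ then gives
$$T_j\of n = \prod_{m=0}^{n-1}\pas{1 - q^{j-m}},$$
and whenever $0\leq j\leq n-1$ the factor corresponding to $m=j$ equals $1-q^0 = 0$, so $T_j\of n = 0$ in the required range. This completes the proof. The only technical point is the bookkeeping of the three exponents $\qexp nk$, $\qexp n{k+1}$ and $\qexp{n-1}k$ in the recursion step; everything else is routine manipulation of finite sums, and in fact one could alternatively obtain the closed form for $T_j\of n$ in a single line from the finite $q$-binomial theorem applied with $z=q^{j-n+1}$.
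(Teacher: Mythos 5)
Your proof is correct, but it takes a genuinely different route from the paper. The paper fixes $r$ and inducts on $n$, peeling off the single factor $\pas{1-\gamma_{n-r}q^k}$ in each step and applying both recursions \eqref{eq:qbinom-rec-1} and \eqref{eq:qbinom-rec-2} to regroup the $k$-th summand into two instances of the identity for $n-1$; the $\gamma_i$ are never expanded. You instead linearize the whole problem at the outset: expanding the product into elementary symmetric functions of the $\gamma_i$ reduces \eqref{eq:qbinom-identity} to the vanishing of the $\gamma$-free sums $T_j\of n=\sum_k\pas{-1}^kq^{\qexp nk+jk}\qbinom nk$ for $0\leq j\leq n-r\leq n-1$, and these are evaluated in closed form as $\prod_{m=0}^{n-1}\pas{1-q^{j-m}}$ --- either by your one-step recursion (whose bookkeeping with $\qexp n{k+1}-\qexp nk=k+1-n$ and $\qexp nk=\qexp{n-1}k-k$ I have checked and which is right) or, as you note, in one line from the finite $q$-binomial theorem with $z=-q^{j-n+1}$. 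Your argument buys a transparent explanation of the vanishing (a single factor $1-q^0$ in an explicit product) and shows the stronger fact that the sum kills \emph{any} polynomial of degree at most $n-1$ in $q^k$, not just products of the given form; the paper's induction buys a self-contained argument that generalizes \eqref{eq:1-1^n} directly without invoking symmetric-function expansion or the $q$-binomial theorem. Either proof serves the application in Corollary \ref{cor:x_independent_solution} equally well.
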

\begin{proof}
Let $r$ be arbitrary but fixed, and proceed by induction on $n$: Clearly, for $n\leq r$ the statement is true, since
it is equivalent to \eqref{eq:1-1^n} in this case.


For the inductive step $\pas{n-1}\to n$, we
set $f\defeq\prod_{i=1}^{n-1-r}\pas{1-\gamma_i\cdot q^{k}}$ and
use the recursions \eqref{eq:qbinom-rec-1}
and \eqref{eq:qbinom-rec-2} to rewrite the unsigned $k$--th summand in \eqref{eq:qbinom-identity} as
{\small
\begin{equation*}
q^{e\of{n,k}}\pas{
\pas{
\qbinom{n-1}k q^k + \qbinom{n-1}{k-1}
}
-
\gamma_{n-r}\cdot q^{k}
\pas{
\qbinom{n-1}k  + \qbinom{n-1}{k-1}q^{n-k}
}
}
f.
\end{equation*}
}

Using the obvious relations
$$
e\of{n,k} + k = e\of{n-1,k} \text{ and }
e\of{n,k} + n - 1 = e\of{n-1,k-1},
$$
we may rewrite this
as
\begin{multline*}
\pas{
\qbinom{n-1}k q^{e\of{n-1,k}} + q^{1-n}\qbinom{n-1}{k-1}q^{e\of{n-1,k-1}}
}
f
\\
-
{\gamma_{n-r}}
\pas{
\qbinom{n-1}k q^{e\of{n-1,k}}  + q\qbinom{n-1}{k-1}q^{e\of{n-1,k-1}}
}
f,
\end{multline*}
and the assertion follows by induction.
\end{proof}
From this Lemma, we can deduce immediately the following Corollary.
\begin{cor}
\label{cor:x_independent_solution}
Let $n,r$ be positive integers, and assume that for each for $j=1,2,\dots,n+1$ we are
given
\bit
\item $n-r$ complex numbers $\gamma_{j,1},\gamma_{j,2},\dots,\gamma_{j,n-r}$
\item and a rational function $c_j = c_j\of q$ in $\ratfield$.
\eit

Then each $n+1$ subsequent rows $i=i_0+1,i_0+2,\dots,i_0+n+1$ (for arbitrary $i_0\in\N$) of the form
\begin{equation}
\label{eq:row}
\pas{c_j\prod_{k=1}^{n-r}\pas{1-\gamma_{j,k}\, q^{i}}}_{j=1}^{n+1}
\end{equation}
are \EM{not linearly independent} over $\ratfield$ 
(by \lemref{lem:qbinom-identity}),
and therefore \EM{the same} is true for the \EM{columns} of the matrix
\begin{equation}
\label{eq:cor-mat}
\pas{c_j\prod_{k=1}^{n-r}\pas{1-\gamma_{j,k}\, q^{i}}}_{\pas{i,j}=\pas{i_0+1,1}}^{\pas{i_0+n+1,n+1}}
\end{equation}

Hence the homogeneous system of linear equations corresponding to this matrix
has a \EM{nontrivial} solution $\mathbf b=\pas{b_1,b_2,\dots,b_{n+1}}$ in $\ratfield$. 

Since such solution $\mathbf b$ fulfils the $n$ subsequent
equations corresponding to rows $i_0+2,i_0+3,\dots,i_0+n+1$, it also fulfils the equation
corresponding to row $i_0+n+2$ (since this row is a linear combination of its
$n$ predecessors, by \lemref{lem:qbinom-identity}) and the equation corresponding to row $i_0+0$
(since this is a linear combination of its $n$ successors, again by \lemref{lem:qbinom-identity}).

In short: \EM{Every} solution $\mathbf b$ of the homogeneous system of linear equations corresponding to \eqref{eq:cor-mat}
fulfils the equations corresponding to
rows of the form \eqref{eq:row} \EM{for all} $i\in\N$, and in that sense is
\EM{$i$--invariant}. Therefore we have for all complex numbers $q$ with $\abs q< 1$
$$
0 = \lim_{i\to\infty} \sum_{j=1}^{n+1}c_j\pas{\prod_{k=1}^{n-r}\pas{1-\gamma_{j,k}\, q^{i}}}  b_j
= \sum_{j=1}^{n+1}{c_j  b_j},
$$
so $\mathbf b$ is a solution to the simpler equation
$$
\sum_{j=1}^{n+1}{c_j  b_j} = 0.
$$
\end{cor}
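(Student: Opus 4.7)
The plan is to derive all three claims of the Corollary from Lemma~\ref{lem:qbinom-identity} applied one column at a time. First I would fix an arbitrary column index $j$ and substitute $\gamma_\ell \mapsto \gamma_{j,\ell}\, q^{i_0+1}$ into the Lemma; since its summation variable $k$ then plays the role of $i - i_0 - 1$, and since the prefactor $c_j$ factors through the sum, the Lemma yields
\begin{equation*}
\sum_{k=0}^{n} (-1)^k q^{\qexp{n}{k}}\qbinom{n}{k}\cdot c_j\prod_{\ell=1}^{n-r}\pas{1-\gamma_{j,\ell}\,q^{i_0+1+k}} = 0.
\end{equation*}
The crucial observation is that the row-coefficients $(-1)^k q^{\qexp{n}{k}}\qbinom{n}{k}$ depend on neither $j$ nor the $\gamma$'s; hence one and the same linear combination annihilates \emph{every} column of \eqref{eq:cor-mat}. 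So the $n+1$ rows of \eqref{eq:cor-mat} are linearly dependent over $\ratfield$, and by equality of row-rank and column-rank the homogeneous system admits a nontrivial $\mathbf b \in \ratfield^{n+1}$.

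For the $i$-invariance, I would observe that the very same argument applies verbatim for any offset $i_0'$ in place of $i_0$. Taking $i_0' = i_0 + 1$ produces a linear dependence among rows $i_0+2,\ldots,i_0+n+2$ in which the coefficient of the new row $i_0+n+2$ equals $(-1)^n q^{\qexp{n}{n}}\qbinom{n}{n} = (-1)^n q^{n(1-n)/2} \neq 0$ in $\ratfield$; thus row $i_0+n+2$ is an $\ratfield$-linear combination of its $n$ immediate predecessors, which $\mathbf b$ annihilates by assumption, so it annihilates row $i_0+n+2$ as well. Taking $i_0' = i_0 - 1$ and using that the coefficient of row $i_0$ in the shifted relation equals $\qbinom{n}{0} = 1 \neq 0$ shows by the same token that $\mathbf b$ kills row $i_0$. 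Iterating in both directions yields that $\mathbf b$ satisfies the row-equation \eqref{eq:row} for \emph{every} $i \in \N$.

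For the final (limit) claim, since $|q|<1$ each factor $\pas{1-\gamma_{j,\ell}\, q^i}$ tends to $1$ as $i\to\infty$, so passing to the limit in the $i$-invariant equation gives $\sum_{j=1}^{n+1} c_j b_j = 0$ as a complex-number identity; both sides are rational functions of $q$ that agree on a set with an accumulation point inside the open unit disk, so the equality holds in $\ratfield$. The only real obstacle is the index bookkeeping in the first step, namely checking that the Lemma's row-coefficients are genuinely independent of the column $j$; once that is in hand, the remainder is routine linear algebra together with dominated convergence of the geometric factors.
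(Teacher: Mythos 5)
Your proposal is correct and follows essentially the same route as the paper's own (inline) argument: Lemma~\ref{lem:qbinom-identity} applied columnwise with $j$--independent coefficients gives the row dependence, shifting the window by $\pm1$ with nonzero extremal coefficients gives the $i$--invariance, and the limit $i\to\infty$ for $\abs q<1$ gives the final equation. You merely make explicit the substitution $\gamma_\ell\mapsto\gamma_{j,\ell}q^{i_0+1}$ and the nonvanishing of the coefficients $\qbinom n0$ and $(-1)^nq^{\qexp nn}\qbinom nn$, which the paper leaves implicit.
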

Now observe that 
\corref{cor:x_independent_solution} \EM{applies} to the first row of $\mfmatpprime{\mfseq^\star,x}$
after multiplication by the denominator of its last non--zero entry (as in \eqref{eq:funfact1}): 
The $j$--th entry in this row is equal to $\qpoch{q^{2m-j}}{q}{j}$ times a product of $m-1$ factors of the form $\pas{1-q^{2x+y}}$, and the length of this row is $m+1$;
hence we may consider the $m+1$ equations for $x,x+1/2,\dots,x+m/2$ and deduce that there exists an $x$--invariant solution for the \EM{first} equation of \eqref{eq:starred-equation}.

\step{Another useful observation concerning \eqref{eq:mppp}}



We set $x=\frac12-k$ in $\mfentry{m,i,j,x}$ and observe:
\begin{align}
\mfentry{m,i,j,\frac12-k}
& =
\frac{
\qpoch{q^{2\pas{m-i+1}-j}}{q}{j}\qpoch{q^{2\pas{m-k+1}-j}}{q}{j}
}{
\qpoch{q^{2\pas{2m-k+\frac12-i+1}-2j}}{q^2}{j}
} \notag \\
& =
\frac{
\qpoch{q^{2\pas{m-k+1}-j}}{q}{j}\qpoch{q^{2\pas{m+\pas{\frac12-i}}-j+1}}{q}{j}
}{
\qpoch{q^{2\pas{2m+\pas{\frac12-i}-k+1}-2j}}{q^2}{j}
} \notag \\
&= \mfentry{m,k,j,1/2-i}. \label{eq:halfinteger-evaluation}
\end{align}

Note that this means: Any \EM{$x$--invariant} solution of the equation corresponding to row $k$ of $\mfmatpprime{\mfseq^\star,x}$
is also
a solution of \EM{all rows} $i\geq k$, but for \EM{fixed} $x=\frac12-k$.

%
%

\step{Put everything together}
For some subsequence $C\subseteq\setof{0,1,\dots,2m-1}$ of columns of $\mfmatppprime\of{m,x}$, we may consider
the homogeneous system of linear equations corresponding to the submatrix of $\mfmatppprime\of{m,x}$ which consists
of the columns in $C$: We call this system the \EM{equations corresponding to $C$}, and if we consider
only the last $k$ ($1\leq k\leq m$) of these equations (corresponding to the submatrix of $\mfmatppprime\of{m,x}$ which consists
of the rows $m-k+1,m-k+2,\dots,m$ and the columns in $C$), we call this homogeneous system of equations the
\EM{last $k$ equations corresponding to $C$}.
For arbitrary but fixed $x$, the set of solutions of the last $k$ equations corresponding to some $C$
is a \EM{linear subspace} of $\ratfield^{\abs C}$, which \EM{contains} the set of \EM{$x$--invariant solutions}, and the
latter is a \EM{linear subspace} of $\ratfield^{\abs C}$, too. 


Recall that we have to show that the equations corresponding to 
$\mfseq^\star$ have a
\EM{one--dimensional $x$--invariant space of solutions}:
We shall prove this by working our way up from the last row $m$ to the first.

More precisely, we claim
that the last $k$ rows of the equations corresponding to $\mfseq^\star$ 
\bit
\item have an $x$--invariant solutions space whose
dimension is equal
\bit
\item to the number of elements $j$ in $\mfseq^\star$ such that $j\leq 2k$
\item minus $k$,
\eit
\item and that there are no other solutions, i.e.: \EM{Every} solution (for \EM{arbitrary} fixed $x$) is, in fact $x$--invariant.
\eit
This assertion is immediately
clear for $k=1$, since the (single) last equation corresponding to $\mfseq^\star$ has a $1$--dimensional solution space, which
is $x$--invariant since this equation does not contain $x$ at all (see example \eqref{eq:ex-mppp}):
\begin{equation}
\label{eq:last-equation}
1\cdot c_0 + \pas{1-q}\cdot c_1 = 0 \iff\pas{c_0,c_1}\in
\setof{\lambda\cdot\pas{q-1,1}:\lambda\in\C}.
\end{equation}
So assume we proved our claim for the last $k$ rows
$$
m-k+1, m-k+2, \dots, m
$$
of the equations corresponding to $\mfseq^\star$, and assume that for the
next row $m-k\geq 1$ there are $l$ columns of $\mfseq^\star$ which are $\leq 2\pas{k+1}$ (note that $l\geq k+1$, since
$\mfseq^\star$ is an \EM{admissible} sequence). Now recall \eqref{eq:funfact1}: The entries of row $m-k$ are, after multiplication with
the denominator of the entry $\mfmatppprime\of{m,x}_{m-k,2\pas{m-k}-1}$, equal to some element in $\ratfield$ times a product of
$k$ factors of the form $\pas{1-q^{2x+y}}$. So by \corref{cor:x_independent_solution}, row $m-k$ in the equations
corresponding to
\bit
\item the \EM{first $k$ columns} of $\mfseq^\star$,
\item plus one of the columns $a_{k+1}, a_{k+2}, \dots, a_l$ which are less or equal to $2k+1$
\eit
has a non--trivial $x$--invariant solution, and \EM{every} linear combination of these $\pas{l-k}$ $x$--invariant solutions is
again an $x$--invariant solution, so the dimension of the linear space of $x$--invariant solutions is at least
$l-k$. Now we combine two observations:
\begin{enumerate} 

\item Since the first $k$ entries of $\mfseq^\star$ are an admissible sequence, the submatrix of $\mfmatppprime\of{m,x}$
consisting of
\bit
\item the last $k$ rows
\item and the first $k$ columns of $\mfseq^\prime$
\eit
has determinant $\neq 0$. Hence the $k$ equations are linearly independent,
and the dimension of the solution space for arbitrary, but fixed $x$ is $l-k$: But this implies that \EM{all} solutions
(for \EM{any} $x$) are, in fact, $x$--invariant.

\item By observation \eqref{eq:halfinteger-evaluation}, for $i\leq k$ the following rows are identical:
\bit
\item row $m-i$ for $x=\frac12-\pas{m-k}$,
\item row $m-k$ for $x=\frac12-\pas{m-i}$.
\eit
This implies that each $x$--invariant solution
of the $\pas{m-k}$--th equation corresponding to $\mfseq^\star$ is a solution of the last $k+1$
equations corresponding to $\mfseq^\star$ for \EM{fixed} $x=\frac12-\pas{m-k}$, and since we already know that \EM{all} solutions
(for \EM{any} $x$) of the last $k$ equations
are $x$--invariant, the same holds true for the last $k+1$ equations.
\end{enumerate}
Altogether, this finishes the proof of \lemref{lem:triangulization}, and thus of \thmref{thm:det-identity}.\hfill\qedsymbol

\secB{A simple algorithm for actually finding the solutions which constitute $\mffac\mfseq$ of \lemref{lem:triangulization}}
For the homogeneous system of the $m$ linearly independent equations in $2m$ variables corresponding to the coefficient matrix $\mfmatppprime$
\begin{equation}
\label{eq:sys-ppp}
\mfmatppprime\cdot\mathbf c =\mathbf0,
\end{equation}
we expect a solution space of dimension $m$:
We claim that this solution space is \EM{$x$--invariant} and spanned by a vector
$$\mathbf c_1=\pas{1,\alpha_1,0,\alpha_3,\dots,0,\alpha_{2m-1}}$$
which equals the $2m$ first elements of a certain infinite sequence in $\ratfield$, which can
can be \EM{constructed recursively} and which is independent of $m$,
together with $m-1$ \EM{shifts} of this vector
$$
\mathbf c_2=\pas{0,0,1,\alpha_1,0,\alpha_3,\dots,0,\alpha_{2m-3}},\dots \mathbf c_m=\pas{0,0,\dots,1,\alpha_1}.
$$
For instance, the solution space for $m=4$ is the $\ratfield$--span of the columns of the following matrix:
{\small
$$
\begin{pmatrix}
1 & 0 & 0 & 0\\
-\frac1{1-q} & 0 & 0 & 0\\
0 & 1 & 0 & 0\\
\frac{q}{\pas{1-q}^2\pas{1-q^3}} & -\frac1{1-q} & 0 & 0\\
0 & 0 & 1 & 0\\
\frac{q^2\pas{1+q^2}}{\pas{1-q}^3\pas{1-q^3}\pas{1-q^5}} & \frac{q}{\pas{1-q}^2\pas{1-q^3}} & -\frac1{1-q} \\
0 & 0 & 0 & 1\\
\frac{q^3\pas{q^8+q^7+3 q^6+2 q^5+3 q^4+2 q^3+3 q^2+q+1}}{\pas{1-q}^3\pas{1-q^3}^2\pas{1-q^5}\pas{1-q^7}}
&
\frac{q^2\pas{1+q^2}}{\pas{1-q}^3\pas{1-q^3}\pas{1-q^5}}
&
\frac{q}{\pas{1-q}^2\pas{1-q^3}}
& -\frac1{1-q}
\end{pmatrix}
$$ 
}

Since we already \EM{know} that all solutions are $x$--invariant, we may work with the simpler matrix
$\mfmatppprime\of{m,\infty}$ with $\pas{i,j}$--entry
$$
\mfentry{m,i,j,\infty} = \lim_{x\to\infty}\mfentry{m,i,j,x} =
\qpoch{q^{2\pas{m-i+1}-j}}{q}{j}
$$
(where we assumed that $\abs q < 1$ in taking the limit)
and consider 
$$\mfseq^\star=\pas{0,1,3,5,\dots,2m-1}.$$
Observe, that in each step of ``working our
way up'' from the last equation corresponding to $\mfseq^\star$ (as in the proof of \lemref{lem:triangulization}),
precisely one new variable has to be considered. I.e., starting with $\alpha_1=q-1$ from the last equation (see
\eqref{eq:last-equation}), the next--to--last equation becomes a linear equation in a single variable, which,
of course can easily be solved: This explains the recursive construction of the solution vector $\mathbf c_1$.

Now observe that deleting the first two columns and the last row of $\mfmatppprime\of{m,\infty}$,
and dividing all remaining rows by their first entry gives $\mfmatppprime\of{m-1,\infty}$,
whence we may prepend two zeros to the solution $\mathbf c_1$ (as just described) for $m-1$ and obtain another
solution of $\mfmatppprime\of{m,\infty}$: This makes clear that we find the $m$ solution
vectors $\mathbf c_1,\dots,\mathbf c_m$ (which obviously are linearly independent), as claimed above.

We already know that there exists an $x$--invariant solution of the equations corresponding to general
$\mfseq^\star$ (derived from some admissible sequence $\mfseq$, as in the proof of \lemref{lem:triangulization}), which may be viewed as a solution vector for \eqref{eq:sys-ppp} where all entries
not in $\mfseq^\star$ are set to zero: It is easy to see how to construct a solution vector with these ``prescribed zeros''
as a linear combination of $\mathbf c_1,\dots,\mathbf c_m$.

\section*{Acknowledgement}
I am grateful to Christian Krattenthaler for helpful discussions.

\bibliography{/Users/mfulmek/Informations/TeX/database}

\end{document}